\theoremstyle{plain}
\newtheorem{thm}{Theorem}[section]
\newtheorem{prop}[thm]{Proposition}
\newtheorem{lem}[thm]{Lemma}
\theoremstyle{definition}
\newtheorem{definition}[equation]{Definition}
\theoremstyle{remark}
\numberwithin{equation}{section}
\newcommand\group[1]{{\text{\bf#1}}}
\newcommand{\Z}{\mathbb{Z}}
\newcommand{\R}{\mathbb{R}}
\newcommand{\C}{\mathbb{C}}
\newcommand\Or{\group{O}}
\newcommand{\U}{\group{U}}
\newcommand{\EU}{\widetilde{\group{U}}}
\newcommand{\SU}{\group{SU}}
\newcommand\Sp{\group{Sp}}
\newcommand{\union}{\cup}
\renewcommand{\phi}{\varphi}
\newcommand{\suchthat}{\,|\,}
\newcommand{\Gal}{\mathrm{Gal}}
\DeclareMathSymbol{\sdp}{\mathbin}{AMSb}{"6F}
\newcommand\bbz{\mathbb{Z}} 
\newcommand\bbr{\mathbb{R}} 
\newcommand\bbc{\mathbb{C}}
\newcommand\bbh{\mathbb{H}}
\newcommand\ca{\mathcal}
\begin{document}

\title[Stable vector bundles over a real curve]{The moduli
space of stable vector bundles over a real algebraic curve}

\author[I. Biswas]{Indranil Biswas}

\address{School of Mathematics, Tata Institute of Fundamental Research,
Homi Bhabha Rd, Bombay 400005, India}

\email{indranil@math.tifr.res.in}

\author[J. Huisman]{Johannes Huisman}

\address{D\'epartement de Math\'ematiques, Laboratoire CNRS
UMR 6205, Universit\'e de Bretagne Occidentale, 6 avenue
Victor Le Gorgeu, CS 93837, 29238 Brest cedex 3, France}

\email{johannes.huisman@univ-brest.fr}

\author[J. Hurtubise]{Jacques Hurtubise}

\address{Department of Mathematics, McGill University,
Burnside Hall, 805 Sherbrooke St. W.
Montreal, Que. H3A 2K6, Canada}

\email{jacques.hurtubise@mcgill.ca}

\date{}

\begin{abstract}
We study the spaces of stable real and quaternionic
vector bundles on a real algebraic curve.  The basic relationship is
established with unitary representations of an extension of $\bbz/2$
by the fundamental group. By comparison with the space of real or
quaternionic connections, some of the basic topological invariants of
these spaces are calculated.

\end{abstract}

\maketitle

\section{Introduction}

The moduli space of vector bundles on a
compact Riemann surface can be understood
from many points of view. Weil, in an early paper \cite{We}, launched
their study in terms of ``matrix divisors". The next major work
was the classification of vector bundles over elliptic curves
by Atiyah \cite{At}. In the middle of the
nineteen sixties, Narasimhan and Seshadri \cite{NS}, picking up on
some ideas of Weil and Atiyah, showed in a fundamental paper how the 
moduli space of stable vector
bundles on Riemann surface could be understood in terms of
unitary representations of the fundamental group of the surface. This
point of view was vastly expanded upon by Atiyah and Bott \cite{AB} in
1982, who showed how the moduli space was naturally realised inside
the space of all connections as the minima of the Yang--Mills
functional; they then used this idea to analyse the topology of the
moduli space in Morse theoretical terms, showing that the Morse function of
energy on the connections was perfect as an equivariant Morse
function, and that in consequence one could obtain the cohomology of
the moduli space in terms of the equivariant cohomology of the space of
connections (a computation that involves fairly standard ingredients)
and the cohomology of higher order critical points, which can be
obtained inductively from the moduli spaces of vector bundles of lower rank.

This work \cite{AB}, with its ties to symplectic geometry, equivariant
Morse theory, and more generally, ideas and techniques from physics,
opened up a whole series of perspectives in the study of moduli: a
complete review would take up this whole paper, but let us mention on
the side of symplectic geometry and Morse theory the efforts of
Jeffrey and Kirwan, whose techniques gave a complete computation of
the cohomology rings (see, e.g., \cite{JK}); on the physical side, the
work of E. Verlinde \cite{V}, giving Riemann--Roch numbers for the
moduli, and Witten \cite{Wi}, who gave formulae for symplectic
of theirvolumes. For these physical insights, complete mathematical proofs 
 have occupied a large number of mathematicians.

 From the point of view of representation theory, one natural 
question 
once one knows the representation ring is to compute the real and quaternionic 
representations. In our context, this means that we should restrict our attention to real algebraic curves, and study the real and quaternionic moduli on these spaces. Our point of view on real geometry follows that advocated by, e.g., Atiyah \cite{A}, in which real or quaternionic objects are complex objects invariant under an anti--holomorphic involution.

Our purpose in this paper is to consider some of the foundations of this theory. We 
begin by  settling some rather basic questions such as the topological classification of real and quaternionic bundles: equivariant topology, as usual, reserves a few surprises. We then examine the appropriate group whose representations we will study, the orbifold group of the surface under the real structure, and define the real and quaternionic representations of this group.  These representations are shown to correspond to flat real or quaternionic bundles, and a real and quaternionic version of the Narasimhan--Seshadri theorem is given. 

The rest of the paper is devoted to developing some understanding of the topology of the moduli space, in the spirit of Atiyah and Bott. A simple echoing of their ideas  in this context  presents difficulties: the spaces involved have torsion in their cohomology  and normal bundles are not necessarily orientable, for example.  We do describe the spaces of real and quaternionic connections, and compute some of their invariants such as the fundamental groups and second homotopy groups: as we have a lower  bound on the indices of the higher order critical points, this gives us the same information for the moduli spaces. 

Much thus remains to be done. The real and quaternionic fixed point
spaces are fixed point sets under an antisymplectic involution, and
the whole arsenal of symplectic techniques used to compute cohomology
has mod $2$ variants over the fixed point set; see e.g. the work of
Duistermaat \cite{Du} and that of Biss, Guillemin and Holm \cite{BGH}.
The mod 2 cohomology should then be computable following the ideas of
Atiyah and Bott, Jeffrey and Kirwan. In a similar vein, Ho and Jeffrey
\cite{HJ} have a computation of volumes on representation spaces on a
non--orientable surface which should be adaptable to this context.

The second and third authors would like to thank the T.I.F.R. 
(Mumbai) for its warm hospitality during the preparation of this 
paper. The first author wishes to thank the Harish--Chandra Research
Institute for hospitality.
The third author also would like to acknowledge the support of NSERC.

\section{Real curves, and real and quaternionic vector bundles}

Let~$Y$ be a smooth geometrically connected projective real algebraic
curve of genus $g$. The Riemann surface associated to $Y$ is the
connected complex curve
$$
X\,=\, Y(\C)\,=\, Y\times_{\mathbb R}\mathbb C
$$ obtained by
field extension. Since $Y$ is a real 
curve, the Galois group $\Gal(\C/\R)\,=\, \Z/2$
acts on $Y(\C)$; this action is an anti--holomorphic involution
$$
\sigma\,:\, Y(\C)\,\longrightarrow\, Y(\C)\, .
$$
In particular, $\sigma$ reverses orientation.
The real points $Y(\mathbb R)$ of $Y$ are the fixed 
points of $\sigma$. It follows that the
quotient $Y(\C)/\Gal(\C/\R) \,=\, X/\sigma$ is an
unoriented, and possibly
nonorientable, compact connected topological surface with, possibly
empty, boundary.

Our development uses this point of view of a real 
curve as a complex curve equipped with an involution, and from now on, 
we will let $X$ denote an irreducible complex curve equipped with an 
anti--holomorphic
involution $\sigma$. Let $X/\sigma$ denote the quotient. The
fixed points of $\sigma$ will be denoted by $X(\R)$. 

We will distinguish three types of real curve:

The
curve~$X$ is said to be of
\begin{description}
\item[Type 0]\, if~$X(\R)=\emptyset$, in which case $X/\sigma$ is a
  surface without boundary and is not orientable,
\item[Type I]\, if~$X(\R)$ is non-empty, and $X\setminus X(\R)$ is not connected, in which
case~$X/\sigma$ has a nonempty boundary and is orientable, and
\item[Type II]\, if~$X(\R)$ is non-empty, and $X\setminus X(\R)$ is connected, in which
case~$X/\sigma$ has a nonempty boundary and is not orientable.
\end{description}

Details of the cell structure of these surfaces are given below.

\textbf{Type 0.}

We consider first curves $X$ of even genus $g\,=\, 
2{\widehat g}$. The   
quotient $ X 
/\sigma$ can then be obtained by taking a surface $S$ of genus 
${\widehat g}$, 
cutting out a disk to obtain a surface with boundary $S_0$, then 
glueing the boundary circle to itself by identifying antipodal points. 
The full surface $X $ is obtained by doubling the surface $S_0$ to a 
disjoint union of $S_0$ and $\sigma(S_0)$ (with the opposite orientation) and glueing along the 
boundary by sending any point $z$ of $S_0$ in
one copy of the surface to the antipodal point of $z$ in the
other copy.
Let us choose the base point $x_0$ on the boundary of $S_0$; 
its antipodal point $\sigma(x_0)$ also lies on the boundary; let 
$\gamma$ denote a half circle on  the boundary joining these two points. 
One 
has  standard generators $\alpha_i, \beta_i$, where ${ i= 1,\dots ,{\widehat 
g}},$ and 
$(\sigma(\gamma)\circ\gamma)$ for the fundamental group of $S_0$, 
satisfying
\begin{equation}
(\prod_{i=1}^{\widehat g}[\alpha_i\, ,\beta_i])( 
\sigma(\gamma)\circ\gamma)\,=\, 1\, ;\label{boundary1}
\end{equation}
likewise there are generators  
$\alpha_i, \beta_i, { i= {\widehat g}+1,\dots ,2{\widehat g}},$ and 
$(\sigma(\gamma)\circ\gamma)$ 
for the fundamental group of $\sigma(S_0)$, satisfying 
$$(\prod_{i={\widehat g}+1}^{2{\widehat g}}[\alpha_i\, ,\beta_i])( 
\sigma(\gamma)\circ\gamma)^{-1}\,=\, 1\, .$$
The fundamental group of the whole curve $X$ then has
$\alpha_i, \beta_i$, where $i= 1,\dots ,2{\widehat g}$, as 
standard generators.

Corresponding to this description of the fundamental group, one has a 
cell decomposition of $S_0$, with two zero--cells $\{x_0\, , \sigma(x_0)\}$, 
one--cells $\alpha_i,\beta_i, i=1,...,g, \gamma,\sigma(\gamma)$, and a single 
two--cell, glued to the one--cells via the relation (\ref{boundary1}); 
this lifts to a cell decomposition of $X$ with one--cells $\alpha_i, 
\sigma(\alpha_i),\beta_i,\sigma(\beta_i),\gamma,\sigma(\gamma)$, and two 
two--cells interchanged by $\sigma$.

In a similar fashion, we can consider curves of odd genus $g= 2{\widehat 
g}+1$. One can construct the quotient $ X /\sigma$ of a surface of genus 
$g=2{\widehat g}+1$ by taking a Riemann surface $S$ of genus ${\widehat 
g}$, removing two  
disks to obtain a surface with boundary $S_0$, and glueing the two 
boundaries using an orientation preserving diffeomorphism
(as in the construction of the 
Klein bottle from a cylinder). The surface $X$ in 
turn is constructed by taking two copies $S^1_0$ and $S^2_0= \sigma(S^1_0)$ of
$S_0$ with opposite orientations  and gluing one component of the boundary of $S^1_0$ to the
other component of the boundary of $S^2_0$ using the orientation 
preserving diffeomorphism. The 
involution $\sigma$ interchanges the two boundary circles of any
copy of $S_0$, and it takes any interior point in one component to
the corresponding point in the other component.
Let $\gamma$ be a boundary circle of a copy of $S_0$. 
Choose a base point $x_0$ on $\gamma$, and join it in $S_0$ to its image 
$\sigma(x_0)$ on $\sigma(\gamma)$ by a curve $\delta$. One then has that the 
fundamental group of $S_0$ is generated by $\{\alpha_i, 
\beta_i\}_{i=1}^{\widehat g},
\gamma , \delta^{-1}\circ \sigma(\gamma)\circ 
\delta$ with the relation 
$$
(\prod_{i=1}^{\widehat g}[\alpha_i,\beta_i])\gamma  (\delta^{-1}\circ 
\sigma(\gamma)\circ \delta) = 1 .
$$
Likewise, one can choose generators $\{\alpha_i,
\beta_i\}_{{\widehat g}+2}^{2{\widehat g}+1}
,\ \gamma , \delta^{-1}\circ 
\sigma(\gamma)\circ \delta$ for the 
fundamental group of $\sigma(S_0)$, such that setting $\alpha_{{\widehat g}+1} = 
\sigma(\delta)\circ\delta, \beta_{{\widehat g}+1} = \gamma $, we have 
that  $\alpha_i, 
\beta_i$, where $i= 1,\dots ,2{\widehat g}+1$, is a standard set of 
generators 
for 
the 
fundamental group of $X$.
Corresponding to this description of the fundamental group, one has a cell 
decomposition of $S_0$, with two zero--cells $x_0, 
\sigma(x_0)$, one--cells $\{\alpha_i,\beta_i\}_{i= 1}^{\widehat g}, 
\gamma 
, \sigma(\gamma),\delta$, and a single two--cell; this lifts to a cell decomposition of 
$X$ with one--cells $\alpha_i, 
\sigma(\alpha_i),\beta_i,\sigma(\beta_i),\gamma,\sigma(\gamma),\delta, \sigma(\delta)$, 
and two two--cells interchanged by $\sigma$.

 In both cases, using the cellular decompositions as given above, the  quotient surface $X/\sigma$ is    a cofibration
\begin{equation}\label{cofibration}
\vee_{i=1}^{g+1} S^1\longrightarrow X/\sigma\longrightarrow S^2\, .
\end{equation}

\textbf{Type I.}

One now has a quotient surface $X/\sigma$ which is an orientable surface $S_0$ with 
$r$ boundary circles and genus 
$\widehat g = (1+g-r)/2$. Choose a base point on each boundary curve; 
let 
$\gamma_i$, $i = 1,\ldots ,r$, denote the boundary circles, and 
let $\delta_i$, 
$i=2,\ldots ,r$, be paths joining the base point on $\gamma_1$ to the 
base 
point on $\gamma_i$. The surface $X/\sigma$ is the one obtained by 
glueing a disk to the one skeleton $\alpha_1,\beta_1,\ldots , 
\alpha_{\widehat g},\beta_{\widehat g}, \gamma_1,\ldots , \gamma_r, 
\delta_2,\ldots , 
\delta_r$ by the boundary circle to 
$\alpha_1\beta_1\alpha_1^{-1}\beta_1^{-1}\cdots\alpha_{\widehat g} 
\beta_{\widehat g}\alpha_{\widehat g}^{-1}\beta_{\widehat g}^{-1} 
\gamma_1\delta_2\gamma_2\delta^{-1}_2\cdots \delta_r\gamma_r\delta^{-1}_r $. 
This gives a cell decomposition for $X/\sigma$ with $r$ zero--cells, 
one--cells  $\alpha_1,\beta_1,\cdots , \alpha_{\widehat 
g},\beta_{\widehat g}, 
\gamma_1,\cdots , \gamma_r, \delta_2,\cdots, \delta_r$, and one 
two--cell. 
The cofibration for $X/\sigma$ is  
\begin{equation}\label{cofibration3}
\vee_{i=1}^{g+r} S^1\longrightarrow X/\sigma\longrightarrow S^2\, ,
\end{equation}

\textbf{Type II.}

One now has a quotient surface $X/\sigma$ which is an unoriented surface 
with $r$ boundary circles. It can be obtained from an oriented surface 
$S$ of genus ${\widehat g} \,=\, (g-r)/2$ from which one removes $(r+1)$ 
open 
disks obtaining a surface $S_0$ with boundaries 
$\gamma_0,\cdots,\gamma_r$; the quotient $X/\sigma$ is 
obtained from
$S_0$ by glueing $\gamma_0$ to itself, identifying each point to its 
antipodal point. Choose a base point on each boundary curve of $S_0$; let  
$\delta_i, i=1,\dots,r$, be paths joining the base point on $\gamma_0$ 
to 
the base point on $\gamma_i$. The surface $X/\sigma$ is the one 
obtained by glueing a disk to the one--skeleton spanned by 
$\alpha_1,\beta_1,\cdots, 
\alpha_{\widehat g},\beta_{\widehat g}, \gamma_0,\cdots, \gamma_r, 
\delta_1,\cdots, \delta_r$ by the boundary circle to 
$\alpha_1\beta_1\alpha_1^{-1}\beta_1^{-1}\cdots\alpha_{\widehat g} 
\beta_{\widehat g}\alpha_{\widehat g}^{-1}\beta_{\widehat g}^{-1} 
\gamma_0\gamma_0\delta_1\gamma_1\delta^{-1}_1\cdots 
\delta_r\gamma_r\delta^{-1}_r $. This gives a cell decomposition for 
$X/\sigma$ with $r+1$ zero cells, one--cells  $\alpha_1,\beta_1,\cdots, 
\alpha_{\widehat g},\beta_{\widehat g}, \gamma_0,\cdots, \gamma_r, 
\delta_1,\cdots, 
\delta_r$, and one two--cell.

 The cofibration is 
\begin{equation}\label{cofibration4}
\vee_{i=1}^{g+r+1} S^1\longrightarrow X/\sigma\longrightarrow S^2\, ,
\end{equation}

\section{Real and quaternionic bundles}

Let $\pi:E\rightarrow X$ be a holomorphic vector bundle over $X$. Let $\overline{E}$ be the same real vector bundle, with the conjugate complex structure. Note that
$\sigma^*\overline{E}$ is also a holomorphic vector bundle
over $X$. By a \textit{lift} of $\sigma$ to $E$ we mean
an  anti-holomorphic isomorphism, antilinear on the fibers 
\begin{equation}\label{e1}
\widetilde{\sigma}\, :\, E\, \longrightarrow\,
 {E}\,,
\end{equation}
making the diagram 
\begin{equation}\begin{matrix}
 E\ \  &\buildrel{\widetilde{\sigma}}\over{ \longrightarrow}&
 {E}\ \ \\
\downarrow \pi&&\downarrow \pi\\
 X\ \ &\buildrel{ {\sigma}}\over{ \longrightarrow}&
X\ \ \end{matrix}
\end{equation}
commute. Note that this is equivalent to a holomorphic isomorphism of vector bundles
$E\simeq \sigma^*(\overline{E})$.
We will be studying real and quaternionic bundles over
real curves:

\begin{definition}
A pair $(E\, ,\widetilde{\sigma})$ as in \eqref{e1} is said to be 
{\it real} if the composition
$$
E\,\stackrel{\widetilde{\sigma}}{\longrightarrow}\,
{E}
\,\stackrel{\widetilde{\sigma}}{\longrightarrow}
\,  E
$$
is the identity map of $E$.

A pair $(E\, ,\widetilde{\sigma})$ as in \eqref{e1} is said to be 
{\it quaternionic} if the composition
$$
E\,\stackrel{\widetilde{\sigma}}{\longrightarrow}\,
{E}
\,\stackrel{\widetilde{\sigma}}{\longrightarrow}
\,  E
$$
coincides with $-\text{Id}_E$.
\end{definition}

A \textit{simple} vector bundle $E$ is one such that there are no
holomorphic endomorphisms $E\, \longrightarrow\, E$ apart from the constant scalar
multiplications.

\begin{prop} Let $E\longrightarrow X$ be simple of positive rank. 
Suppose that $E \,\sim\, \sigma^*\overline{E}$; then $E$ admits  either a real or 
or a quaternionic structure, but $E$ cannot have both. 
\end{prop}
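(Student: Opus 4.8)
The plan is to extract a single numerical invariant from the given isomorphism and to read off the two cases from its sign. First I would translate the hypothesis $E\,\sim\,\sigma^*\overline{E}$ into the existence of a lift $\widetilde{\sigma}\colon E\to E$ as in \eqref{e1}, namely an anti-holomorphic, fiberwise antilinear isomorphism covering $\sigma$; this equivalence is exactly the remark preceding the definition. The key object is the composition $\widetilde{\sigma}^2=\widetilde{\sigma}\circ\widetilde{\sigma}$. Composing two anti-holomorphic, fiberwise antilinear maps yields a holomorphic, fiberwise $\C$-linear bundle map covering $\sigma^2=\id_X$, hence a genuine holomorphic endomorphism of $E$. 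Since $E$ is simple, it follows that $\widetilde{\sigma}^2=\lambda\,\id_E$ for some $\lambda\in\C^*$.

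The central step is to pin down $\lambda$. I would compute $\widetilde{\sigma}^3$ in two ways. Associating it as $\widetilde{\sigma}\circ\widetilde{\sigma}^2=\widetilde{\sigma}\circ(\lambda\,\id_E)$ and using the antilinearity of $\widetilde{\sigma}$ produces $\overline{\lambda}\,\widetilde{\sigma}$, whereas associating it as $\widetilde{\sigma}^2\circ\widetilde{\sigma}=(\lambda\,\id_E)\circ\widetilde{\sigma}$ produces $\lambda\,\widetilde{\sigma}$. Since $\widetilde{\sigma}$ is an isomorphism, hence nonzero, we get $\overline{\lambda}=\lambda$, so in fact $\lambda\in\R^*$.

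Next I would analyse the dependence on the choice of lift. Any two lifts differ by a fiberwise $\C$-linear holomorphic automorphism of $E$ covering the identity, which by simplicity is a scalar $c\in\C^*$; thus every lift has the form $c\,\widetilde{\sigma}$. A short computation using antilinearity gives $(c\,\widetilde{\sigma})^2=c\,\overline{c}\,\widetilde{\sigma}^2=|c|^2\lambda\,\id_E$. Hence rescaling multiplies $\lambda$ by the positive real number $|c|^2$: the sign of $\lambda$ is an invariant of $E$ independent of all choices, while $|\lambda|$ can be normalised to $1$ by choosing $c$ with $|c|^2=|\lambda|^{-1}$. This produces a lift squaring to $+\id_E$ when $\lambda>0$, i.e.\ a real structure, and a lift squaring to $-\id_E$ when $\lambda<0$, i.e.\ a quaternionic structure.

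Mutual exclusivity is then immediate from the invariance of the sign: a real structure forces $\lambda>0$ and a quaternionic one forces $\lambda<0$ for the same intrinsic sign, so the two cannot coexist. The only point demanding genuine care, such as it is, is the bookkeeping of linearity versus antilinearity under composition: one must check that $\widetilde{\sigma}^2$ is $\C$-linear so that simplicity applies, and that the scalar migrates through $\widetilde{\sigma}$ as its complex conjugate. Once that is handled, the rest of the argument is formal.
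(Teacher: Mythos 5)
Your proof is correct and follows essentially the same route as the paper: simplicity forces $\widetilde{\sigma}^2$ to be a scalar, commuting it past $\widetilde{\sigma}$ shows that scalar is real, and rescaling the lift changes it by a positive factor, so its sign is the invariant that decides between real and quaternionic. The only cosmetic difference is in the exclusivity step, where the paper computes directly with a hypothetical pair $\widetilde\sigma,\widehat\sigma$ to get $-1=\overline{c}c$, while you deduce it from the observation (again via simplicity) that all lifts are scalar multiples of one another and hence share the sign of $\lambda$; these are the same calculation in different packaging.
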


\begin{proof}
Take an anti-holomorphic isomorphism
$$
\widetilde{\sigma} \, :\, E\, \longrightarrow \, {E}\, ,
$$
lifting $\sigma$. Since $\widetilde{\sigma}^2\,:\, E\,\longrightarrow\, E$,
$$
\widetilde{\sigma}^2\, =\, c\cdot \text{Id}_E\, ,
$$
where $c\, \in\, {\mathbb C}^*$. Since $\widetilde{\sigma}^2$
commutes with $\widetilde{\sigma}$, it follows that
$c\, \in\, {\mathbb R}\setminus\{0\}$. Note that if
$\widetilde{\sigma}$ is replaced by $a\cdot\widetilde\sigma$,
where $a\, \in\, {\mathbb R}\setminus\{0\}$, then
$(\widetilde\sigma)^2$ gets replaced by $a^2(\widetilde\sigma)^2$. Therefore,
$E$ admits either a real structure or a quaternionic structure. 

If one has a real structure
$\widetilde\sigma$ and a quaternionic structure $\widehat\sigma$ on $E$,
then $\widetilde\sigma^2\,=\, 1$, $\widehat\sigma^2 \,=\,-1$, and 
$\widetilde{\sigma}\circ\widehat{\sigma} \,= \,c$ 
for some constant $c$. Therefore,
$$
-1 = \widetilde{\sigma}^2\widehat{\sigma}^2 = 
\widetilde{\sigma}(\widetilde{\sigma}\widehat{\sigma})\widehat\sigma = 
\widetilde{\sigma}(c\cdot)\widehat\sigma = \overline{c} 
\widetilde{\sigma} 
\widehat{\sigma} = \overline{c} c\, ,
$$ 
a contradiction.
\end{proof}

One can easily find a decomposable bundle which is 
both real and quaternionic. For example, take $E\,=\,
{\mathcal O}_X\oplus {\mathcal O}_X$. Let $\sigma_0$ be
the real structure on ${\mathcal O}_X$ defined
by $f\, \longmapsto\, \overline{f\circ \sigma}$. So
$E$ has the real structure $\sigma_0\oplus\sigma_0$.
On the other hand, for any $J\, \in\, \text{GL}(2,{\mathbb C})$
with $J^2\,=\, -\text{Id}$, composing
$\sigma_0\oplus\sigma_0$ with $J$
we get a quaternionic structure on $E$.

\begin{prop}
Consider two triples $(E\, ,h\, , \widehat{\sigma})$ and
$(E'\, ,h'\, , \widehat{\sigma}')$, where
\begin{itemize}
\item $(E\, ,h)$ and $(E'\, ,h')$ are flat holomorphic hermitian vector 
bundles over $X$ of rank $n$,

\item $\widehat{\sigma}$ and $\widehat{\sigma}'$ are either both real
structures or both quaternionic structures, and

\item $\widehat{\sigma}$ and $\widehat{\sigma}'$ are unitary
with respect to $h$ and $h'$ respectively.
\end{itemize}
Assume that the
holomorphic vector bundle $E$ is isomorphic to $E'$. Then
there is a holomorphic isomorphism
$$
f\, :\, E\, \longrightarrow\, E'
$$
that transports $h$ to $h'$ and $\widehat{\sigma}$ to
$\widehat{\sigma}'$.
\end{prop}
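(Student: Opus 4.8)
The plan is to first match the Hermitian structures by a holomorphic isomorphism, thereby reducing the problem to comparing two compatible real (resp.\ quaternionic) structures on a single flat Hermitian bundle, and then to solve the resulting conjugacy problem factor by factor on the endomorphism algebra.

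First I would use the uniqueness, up to holomorphic automorphism, of the flat Hermitian metric on a fixed holomorphic bundle. Being flat and Hermitian, $(E,h)$ is polystable of degree $0$; writing the canonical isotypic decomposition $E\,\cong\,\bigoplus_j E_j\otimes W_j$ with the $E_j$ mutually non-isomorphic and stable of degree $0$, one has $\End(E)\,\cong\,\prod_j \End(W_j)$, and a flat Hermitian metric amounts, up to the positive scalars on the $E_j$, to a choice of Hermitian metric on each multiplicity space $W_j$. Fixing any holomorphic isomorphism $\psi\colon E\to E'$, the pullback $\psi^*h'$ is another flat Hermitian metric on $E$, so there is a holomorphic automorphism $g$ of $E$ with $g^*(\psi^*h')=h$; then $f_0:=\psi\circ g$ transports $h$ to $h'$. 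Setting $\tau:=f_0^{-1}\circ\widehat{\sigma}'\circ f_0$, we are reduced to the following: on the single bundle $(E,h)$ we have two anti-holomorphic lifts $\widehat\sigma,\tau$ of $\sigma$, both $h$-unitary and both with square $\epsilon\cdot\mathrm{Id}_E$ (with $\epsilon=+1$ in the real case and $\epsilon=-1$ in the quaternionic case), and we must produce an $h$-unitary holomorphic automorphism $u$ with $u\,\widehat\sigma\,u^{-1}=\tau$; then $f:=f_0\circ u$ is the desired isomorphism.

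To solve this, note that conjugation $\theta(a):=\widehat\sigma\,a\,\widehat\sigma^{-1}$ is a conjugate-linear involutive $*$-automorphism of $\End(E)\cong\prod_j M_{m_j}(\C)$ preserving the unitary group $K:=\prod_j \U(m_j)$, and that $w:=\tau\circ\widehat\sigma^{-1}\in K$ satisfies $\theta(w)=w^{-1}$ (this is exactly the identity $\tau^2=\widehat\sigma^2$). The problem becomes the cocycle equation $u\,\theta(u)^{-1}=w$ for $u\in K$, i.e.\ the vanishing of the relevant class in $H^1(\Z/2,K)$. Since $\theta$ is induced by $\sigma$, it permutes the factors according to the involution $E_j\mapsto\sigma^*\overline{E_j}$ on stable summands: factors with $E_j\not\cong\sigma^*\overline{E_j}$ are interchanged in pairs, and on such a pair the equation is solvable with no obstruction (one component of $u$ may be chosen freely and the other is then forced), whereas factors with $E_j\cong\sigma^*\overline{E_j}$ are fixed and there $\theta$ restricts to a conjugate-linear $*$-involution of $M_{m_j}(\C)$ of definite type.

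The crux, and the step I expect to be the main obstacle, is the solvability on a fixed factor. There the equation is equivalent to the assertion that the two anti-unitary operators induced by $\widehat\sigma$ and by $\tau$ on the multiplicity space are unitarily conjugate, i.e.\ the uniqueness up to unitary equivalence of a real (resp.\ quaternionic) structure on a Hermitian vector space, proved by exhibiting an adapted orthonormal (resp.\ symplectic) basis. What makes it applicable is precisely the hypothesis that $\widehat\sigma$ and $\tau$ are of the same kind: the common value $\widehat\sigma^2=\tau^2=\epsilon\cdot\mathrm{Id}_E$ forces the structures induced on each fixed factor to be of the same type, so that no real-versus-quaternionic obstruction — the dichotomy recorded in the previous Proposition — can arise. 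Assembling the solutions on the swapped pairs and on the fixed factors yields the required $u\in K$, and hence $f$.
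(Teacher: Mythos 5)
Your proof is correct, but it runs in the opposite order from the paper's and uses different tools at both stages. The paper first matches the anti-holomorphic structures: it forms $\mathcal{V}=H^0(X,Hom(E,E'))$ with the conjugate-linear involution $T\mapsto\widehat{\sigma}'\circ T\circ\widehat{\sigma}$ and uses the Zariski density of the real form $\mathcal{V}^\phi$ inside $\mathcal{V}$ to find an isomorphism already intertwining $\widehat{\sigma}$ with $\widehat{\sigma}'$; only then does it match the two metrics, by an induction over $\widehat{\sigma}$-irreducible summands in which the endomorphism algebra is either $\C$ or $\C\oplus\C$ and a real rescaling suffices. You instead match the metrics first, via the Schur-lemma description of flat Hermitian metrics on the isotypic decomposition $\bigoplus_j E_j\otimes W_j$, and then solve the conjugacy problem for the two lifts as a $\Z/2$-cocycle equation in $\prod_j \mathrm{U}(m_j)$, split into swapped pairs (no obstruction) and fixed factors (uniqueness, up to unitary conjugacy, of an anti-unitary operator of given square on a Hermitian space). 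The paper's density argument is slicker for its step and needs no case analysis, while your route makes the remaining step explicit linear algebra on the multiplicity spaces. The one point you should spell out on a fixed factor is that writing $\widehat{\sigma}=\sigma_j\otimes t_j$ requires choosing a unitary lift $\sigma_j$ of $\sigma$ to the simple summand $E_j$, which exists with a well-defined sign $\epsilon_j=\sigma_j^2$ precisely by the dichotomy of the preceding Proposition; it is then $t_j^2=t_j'^2=\epsilon\epsilon_j$ (and not $\epsilon$ alone) that guarantees the two induced structures on $W_j$ have the same type, so the anticipated obstruction indeed vanishes.
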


\begin{proof}
We will first show that there is a holomorphic isomorphism
$$
f\, :\, E\, \longrightarrow\, E'
$$
that transports $\widehat{\sigma}$ to $\widehat{\sigma}'$.
For this, consider the complex vector space
$$
{\mathcal V}\, :=\, H^0(X,\, Hom(E\, ,E'))
\,=\, H^0(X,\, E'\otimes E^*)\, . 
$$
We have a conjugate linear isomorphism
$$
\phi\, :\, {\mathcal V}\, \longrightarrow\, {\mathcal V}
$$
defined by $T\, \longmapsto\, \widehat{\sigma}'\circ T\circ
\widehat{\sigma}$. Let
$$
{\mathcal V}^\phi\, \subset\,
{\mathcal V}
$$
be the real subspace fixed by $\phi$.
Since both $\widehat{\sigma}$ and
$\widehat{\sigma}'$ are involutions, it follows that
$\phi$ is also an involution. Therefore, the natural map
$$
{\mathcal V}^\phi\oplus \sqrt{-1}{\mathcal V}^\phi
\, \longrightarrow\, {\mathcal V}
$$
is an isomorphism; the automorphism $\phi$ acts on
$\sqrt{-1}{\mathcal V}^\phi$ as multiplication by $-1$.

Since $E$ is holomorphically isomorphic to $E'$, there is a nonempty
Zariski open subset of ${\mathcal V}$ consisting of isomorphisms from
$E$ to $E'$. Any nonempty Zariski open subset of ${\mathcal V}$ must
intersect ${\mathcal V}^\phi$. Any isomorphism
$E\, \longrightarrow\, E'$ lying in ${\mathcal V}^\phi$ takes
$\widehat{\sigma}$ to $\widehat{\sigma}'$.

This reduces us to the situation of a bundle $E$, with real structure $\hat \sigma$, and two Hermitian metrics $h$, $h'$, with $\sigma$ unitary for both of them. Since the Hermitian metrics $h$,$h'$ on $E$   give  rise to   flat connections, $E$ is polystable of degree zero, that is a sum $\oplus_j V_j$ of stable bundles, with all summands of degree zero; let us write this as  $\oplus_\alpha V_\alpha^{\oplus n_\alpha}$, with the $V_\alpha$ distinct bundles. Each summand $V_\alpha^{\oplus n_\alpha}$ is orthogonal to the others with respect to both $h$ and $h'$; on these summands, the two metrics are related by an element of $Gl(n_\alpha)$. Let
$V\, \subset\, E$ be a subbundle of $E$ of the form $V= \oplus_\alpha V_\alpha^{\oplus k_\alpha}$, then, for either of our two metrics $h$,$h'$:
\begin{itemize}
\item $V$ is preserved by the Chern connections associated to the metric,

\item the orthogonal complement $V^\perp = \oplus_\alpha V_\alpha^{\oplus (n_\alpha- k_\alpha)}$ is also preserved, and

\item since the map $\widehat \sigma$ is unitary, 
if $V$ is preserved by $\widehat \sigma$, then
$V^\perp$ is also preserved by $\widehat \sigma$.
\end{itemize}

These facts allow us to proceed inductively: it will be enough to prove the
proposition under the assumption that 
$(E\, , \widehat{\sigma})$ 
is irreducible; this means that the only holomorphic
subbundles of $E$  preserved by
$\widehat{\sigma}$ are
the zero subbundle and $E$ (respectively, the zero subbundle
and $E'$). 

If $(E\, , \widehat{\sigma})$ is irreducible, then there
are exactly two possibilities:
\begin{itemize}
\item $H^0(X,\, Hom(E\, ,E))\, =\, \mathbb C$, in particular,
$E$ is indecomposable.

\item $H^0(X,\, Hom(E\, ,E))\, =\, {\mathbb C}\oplus{\mathbb C}$;
in this case, $E\,=\, F\oplus\sigma^*\overline{F}$,
$$
H^0(X,\, Hom(F\, ,F))\, =\, \mathbb C\, ,
$$
and $F\, \not=\, \sigma^*\overline{F}$.
\end{itemize}
In the second case, where ${\mathcal V}\, :=\, H^0(X,\, Hom(E\, ,E))\, 
=\, {\mathbb 
C}\oplus{\mathbb C}$, the hermitian metrics on
$$
E\,=\, F\oplus\sigma^*\overline{F}
$$
is induced by   hermitian metrics on $F$. Also, the real
subspace
$$
{\mathcal V}^\phi\, \subset\, {\mathcal V}\,=\,
{\mathbb C}\oplus{\mathbb C}
$$
coincides with all homomorphisms of the form
$(\lambda\, ,\overline{\lambda})$, where $\lambda\,\in\,
{\mathbb C}$.

In both cases, scaling a holomorphic isomorphism from $E$ to $E'$
by a suitable real number, the required isomorphism is obtained.
\end{proof}

\section{The topology of real and quaternionic bundles}

As above, let $X$ be a Riemann surface of genus $g$ with an
anti--holomorphic involution $\sigma$. We are interested in the 
classification of holomorphic
vector bundles with {\it real} or {\it quaternionic} structures, i.e.,
lifts $\widetilde \sigma$ of the involution $\sigma$ to anti--holomorphic 
maps on the bundles, antilinear on the fibers, satisfying 
$\widetilde\sigma^2= 1$ for the real structures, and $\widetilde 
\sigma^2=-1$ 
for the quaternionic structures.

A first step lies in understanding how these bundles are classified 
topologically. Rank $n$ vector bundles on any manifold are classified 
by homotopy classes of maps into the classifying space $B\U_n$. This
last space is a Grassmannian of $n$--planes in infinite dimensional 
complex space, obtained as a limit of the Grassmannians of $n$--planes 
in $N$--dimensional space. We note that these Grassmannians, and the
universal bundles over them, can be given real and quaternionic 
structures, by considering the natural actions of the involutions on 
$\bbc^N$:
\begin{itemize}
\item{\it Real case}:\ $\rho(x_1,\cdots,x_n) = (\overline{x}_1, 
\cdots,\overline{x}_N)$
\item{\it Quaternionic case}:\ $\rho(x_1,\cdots,x_{2k}) = 
(-\overline{x}_2, 
\overline{x}_1,\cdots,-\overline{x}_{2k}, \overline{x}_{2k-1})$, where 
$N=2k$.
\end{itemize}

Noting now that one can average sections ($(s+\widetilde\sigma(s))/2$) 
to 
make them real or quaternionic, the usual arguments (see Milnor and 
Stasheff \cite[Section 5]{MS}) using partitions of unity give us
continuous maps of the Riemann surfaces into the Grassmannians in such a way 
that the bundles with their real or  quaternionic structures are given 
by pull--backs; also, involution--equivariant homotopy classes of
involution--equivariant maps correspond to isomorphism classes of real 
or quaternionic bundles.

\subsection{Real case}

{\it Subcase 1: $X$ has real points}. 

Let $X(\bbr)$, the curve of points fixed by $\sigma$, have $r$ 
components; the quotient $X/\sigma$ is then a surface with boundary $X(\bbr)$, that is $r$ boundary 
circles. We have given above a cell decomposition of $X/\sigma$ with  
$0$--cells on 
the boundary, one--cells that are either on the boundary, or only have 
ends on the boundary, and with one two--cell. This lifts to a cell 
decomposition of $X$ such that the cells $C$ are either on the real 
components, or are such that the pair $C,\sigma(C)$ is distinct. In 
particular, we have two $2$--cells.

The real subspace of $B\U_n$ is simply $B\Or_n$. We want to classify 
$\sigma$--invariant maps into $B\U_n$; $X(\bbr)$ is then mapped to 
$B\Or_n$. We map the $0$--skeleton to a point, and then consider the 
images of the boundary circles. The image of each boundary circle gives 
a homotopy class in $\pi_1(B\Or_n) = \bbz/2$; this corresponds to the first 
Stiefel--Whitney class of the real $\bbr^n$--bundle over the boundary 
circle.

Having fixed these homotopy classes, we want to see what degrees of 
freedom remain. The rest of the one--skeleton can be contracted to a 
point: one simply chooses one cell $C$ in each pair $C,\sigma(C)$, 
contracts it ($B\U_n$ is simply connected), and applies the involution 
in $B\U_n$ to obtain the homotopy for $\sigma(C)$. The remaining degree 
of freedom then lies in the two two--cells $c_2,\sigma(c_2)$. The 
homotopy classes of attachings of $c_2$ with fixed boundary are 
classified by $\pi_2(B\U_n) \,=\, \bbz$; as what one does to 
$\sigma(c_2)$ 
is determined by what one does to $c_2$, we are done.

It is useful however, to be a bit more explicit. One can realise the 
$2$--dimensional class generating $H_2(B\U_n)$ corresponding to the first 
Chern class as a complex projective line $S$, whose equator, in turn, is 
a one dimensional real projective line representing the 1--dimensional 
generator of $H_1(B\Or_n)$. The map of the surface into the classifying space can be chosen to lie in $S$, as follows. We start with the one-skeleton. Recall that the real components of the curve are a certain number of circles; on some of them, say $s$ of them, 
 the Stiefel--Whitney 
classes of the subbundle $E_\bbr$ over $X(\bbr)$ consisting of elements 
fixed by $\widetilde\sigma$ is non-zero; these circles can each get mapped homeomorphically, with the same orientation, to the equator of our sphere; the other boundary circles, and the other one-cells, can simply be mapped to the base point. Once one has this, the disk $c_2$ can be homotoped while fixing the boundary ($B\U_n$ is the union of $S$ and cells of dimension at least four) to 
$s$ 
times the northern hemisphere of $S$, plus a sum of $k$ copies of $S$, 
all with a positive orientation. The disk $\sigma(c_2)$ is then mapped 
to $s$ times the southern hemisphere of $S$, plus a sum of $k$ copies of 
$S$, again all with a positive orientation (the changes of orientations 
on the surface under $\sigma$ and on $S$ under reflection in the equator 
cancel). Thus our surface $X$ is mapped to $2k+s$ times the 2--sphere 
$S$, and so:

\begin{prop}
Real rank $n$ vector bundles $E$ on a real surface $X$ with real points 
are 
classified topologically by the first Stiefel--Whitney classes
$$
w_1(E_{i,\bbr})\,\in\, H^1(X_{i,\bbr},\, \bbz/2)\,=\, \bbz/2
$$
of $E_\bbr$ over the components 
$X_{i,\bbr}$ of 
$X_\bbr$, and the  first Chern class $c_1(E)\,\in\, \bbz$, subject to 
the 
restriction $c_1(E) \equiv \sum _i w_1(E_{i,\bbr})$ mod (2). All such 
combinations of Stiefel--Whitney classes and Chern classes do occur.
\end{prop}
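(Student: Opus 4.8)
The plan is to work throughout with $\sigma$-equivariant homotopy classes of $\sigma$-equivariant maps $f\colon X\to B\U_n$, which by the discussion preceding the statement are in bijection with isomorphism classes of rank $n$ real bundles; here $B\U_n$ carries the involution whose fixed locus is $B\Or_n$. I would build and compare such maps on the equivariant cell structure of $X$ given above, in which every cell either lies in $X(\R)$ or occurs in a free pair $\{C,\sigma(C)\}$, and there are exactly two $2$-cells $c_2,\sigma(c_2)$. Sending the $0$-skeleton to a basepoint of $B\Or_n$ and contracting one cell from each free $1$-pair inside the simply connected $B\U_n$ (transporting the choice to $\sigma(C)$ by equivariance), the only data surviving on the $1$-skeleton are the classes of the images of the boundary circles $\gamma_i\subset X(\R)$ in $\pi_1(B\Or_n)=\Z/2$; these are exactly the $w_1(E_{i,\R})$. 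The remaining freedom sits on $c_2$, where, since $\pi_1(B\U_n)=0$, the attaching map is null-homotopic and the extensions rel boundary form a torsor over $\pi_2(B\U_n)=\Z$, recording $c_1(E)$, while $\sigma(c_2)$ is forced by equivariance. This already shows the two listed invariants exhaust the parameters.

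To pin down the congruence and realizability together, I would use the explicit model in which the generator of $H_2(B\U_n)$ is a complex projective line $S\cong\C\P^1$ whose equator $\R\P^1$ generates $H_1(B\Or_n)$. If $s$ of the boundary circles carry nonzero $w_1$, each wraps once around the equator, so any extension over $c_2$ restricts on $\partial c_2$ to $s$ loops around $\R\P^1$; the minimal such extension is $s$ northern hemispheres, to which one may add $k\in\Z$ full copies of $S$, giving weight $\tfrac{s}{2}+k$ on $c_2$. By equivariance (the orientation reversal on $X$ and the equatorial reflection on $S$ cancel) $\sigma(c_2)$ contributes $s$ southern hemispheres plus $k$ copies, of equal weight, so $c_1(E)=s+2k$ and hence $c_1(E)\equiv s\equiv\sum_i w_1(E_{i,\R})\pmod 2$. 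Running this backwards, given prescribed $w_1$'s and any $c_1$ with $c_1\equiv\sum_i w_1(E_{i,\R})$, one sets $s=\#\{i:w_1(E_{i,\R})\neq 0\}$ and $k=(c_1-s)/2\in\Z$ and writes down the corresponding map into $S$, so every admissible combination occurs.

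It remains to prove completeness: two equivariant maps $f,f'$ with equal invariants are equivariantly homotopic. I would build the homotopy skeleton by skeleton, constant on the $0$-skeleton; on each boundary circle $f|_{\gamma_i}$ and $f'|_{\gamma_i}$ agree in $\pi_1(B\Or_n)=\Z/2$ and so are joined by a homotopy inside $B\Or_n$, while free $1$-cells are joined in the simply connected $B\U_n$ and their partners fixed equivariantly. The main obstacle is the final equivariant obstruction step on the $2$-cell: the obstruction to extending the homotopy across $c_2$ lies in $\pi_2(B\U_n)=\Z$ and equals the difference of the relative degrees of $f,f'$ there, which the computation above identifies, for the common value of $s$, with a fixed multiple of $c_1(E)-c_1(E')=0$. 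One must also check that the boundary-circle homotopies can be assembled equivariantly and that the ambiguity in these choices cannot perturb this obstruction; this is guaranteed because the image of $\pi_2(B\Or_n)$ in $\pi_2(B\U_n)=\Z$ is zero, so the $2$-cell obstruction is controlled purely by $c_1$. Hence it vanishes, the homotopy extends, and $\sigma(c_2)$ is then determined by equivariance, completing the classification.
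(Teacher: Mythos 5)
Your proposal is correct and follows essentially the same route as the paper: the same equivariant cell structure, contraction of the free part of the $1$-skeleton in the simply connected $B\U_n$, identification of the boundary-circle data with $\pi_1(B\Or_n)=\Z/2$ and of the residual freedom on $c_2$ with $\pi_2(B\U_n)=\Z$, and the same explicit model of a projective line $S$ with equatorial $\R\P^1$ yielding $c_1=s+2k$. The only difference is that you spell out the completeness step (the vanishing of the equivariant obstruction on the $2$-cell, using that $\pi_2(B\Or_n)\to\pi_2(B\U_n)$ is zero) more explicitly than the paper does, which is a welcome refinement rather than a different argument.
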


{\it Subcase 2: $X$ has no real points}. This case is somewhat simpler. 
Here one takes the cell decomposition of $S_0$ given in Section 2, and 
lifts it to $X$, so that one has a cell decomposition 
consisting of pairs $C,\sigma(C)$. Choosing one cell from each pair, one 
can homotope the $0$--skeleton and one--skeleton in turn to a single 
point in 
$B\Or_n$. Then the two two--cells $c_2,\sigma(c_2)$ are each homotopic 
to $k$ copies of the standard two--spheres, so that the total degree over 
all of $X$ is $2k$.

\begin{prop}
Real rank $n$ vector bundles $E$ on a real surface $X$ with no real 
points are 
classified topologically by their first Chern class $c_1(E)$,  which 
must be even. All such degrees do occur.
\end{prop}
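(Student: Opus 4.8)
The plan is to run the equivariant--homotopy classification already set up for $B\U_n$, now exploiting that in this subcase $\sigma$ acts \emph{freely} on $X$ (there are no real points), so the lifted cells come in genuinely disjoint pairs $\{C,\sigma(C)\}$ and no cell is forced into the fixed locus $B\Or_n$. I would start from the Type~0 cell decomposition of $X$ recorded in Section~2, in which the one--cells are $\alpha_i,\sigma(\alpha_i),\beta_i,\sigma(\beta_i),\gamma,\sigma(\gamma)$ (together with $\delta,\sigma(\delta)$ in the odd--genus case), the two $0$--cells are $x_0,\sigma(x_0)$, and there are exactly two two--cells $c_2,\sigma(c_2)$ interchanged by $\sigma$. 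By the equivariant correspondence recalled at the start of this section, a real bundle is an equivariant map $f\colon X\to B\U_n$, and such an $f$ is determined by its restriction to one representative of each pair, the value on $\sigma(C)$ being forced to be $\rho\circ f\circ\sigma^{-1}$.

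Next I would trivialize the low skeleta equivariantly. Send $x_0,\sigma(x_0)$ to a base point in $B\Or_n$ (a fixed point of $\rho$); since $B\U_n$ is connected and simply connected, each chosen one--cell $C$ contracts rel endpoints to the constant map, and transporting this homotopy by $\rho$ contracts $\sigma(C)$ at the same time, with no interference because $C$ and $\sigma(C)$ are disjoint. After this equivariant homotopy the whole one--skeleton maps to the base point, so up to equivariant homotopy $f$ is pinned down by the single class $f|_{c_2}\in\pi_2(B\U_n)=\pi_1(\U_n)=\bbz$; call it $k$. The value on $\sigma(c_2)$ is not independent: it is $\rho_*k$.

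The crux is the sign bookkeeping that yields $c_1(E)=2k$ rather than $0$, and this is the step I expect to be the main obstacle, since mishandling either orientation produces the spurious answer $0$. Two reversals must be shown to cancel. First, $\sigma$ is anti--holomorphic, hence orientation--reversing on $X$, so with the orientation of $X$ one has $[\sigma(c_2)]=-\sigma_*[c_2]$. Second, $\rho$ acts on $\pi_2(B\U_n)$, equivalently on the universal first Chern class, by $-1$: complex conjugation sends the generating loop of $\pi_1(\U_n)$ to its inverse, equivalently $\rho^*c_1=-c_1$. Then, using $f\circ\sigma=\rho\circ f$, the evaluation $\langle c_1(E),[X]\rangle=\langle f^*c_1,[c_2]\rangle+\langle f^*c_1,[\sigma(c_2)]\rangle$ has second term $-\langle(\rho\circ f)^*c_1,[c_2]\rangle=-\langle f^*(-c_1),[c_2]\rangle=\langle f^*c_1,[c_2]\rangle=k$, so the total is $2k$. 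This is exactly the assertion that $c_1(E)$ is even.

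Finally I would settle injectivity and realizability. Injectivity is immediate: the skeletal reduction shows $k$ is a complete equivariant--homotopy invariant, and it is read off from $c_1(E)=2k$. For surjectivity, given any even integer $2k$ I would construct $f$ directly by declaring $f|_{c_2}$ to represent $k\in\pi_2(B\U_n)$ and extending by equivariance (concretely, mapping into the distinguished $2$--sphere $S\subset B\U_n$ used in the previous subcase), producing a real rank~$n$ bundle with the prescribed first Chern class; a trivial real summand then covers all $n\ge 1$.
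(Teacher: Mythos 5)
Your argument is essentially the paper's own: the same Type 0 cell decomposition into $\sigma$--paired cells, the same equivariant contraction of the $0$-- and $1$--skeleta using simple connectivity of $B\U_n$, and the same identification of the residual freedom with a single class $k\in\pi_2(B\U_n)=\bbz$ whose two--cell contributions add to give total degree $2k$. The only difference is that you make explicit the cancellation of the two orientation reversals (of $\sigma$ on $X$ and of $\rho$ on $c_1$), which the paper spells out in the subcase with real points and silently reuses here; this is correct and, if anything, a welcome clarification.
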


\subsection{ Quaternionic  case}

{\it Subcase 1: Odd rank}. Here, as there is no odd--dimensional 
quaternionic vector space, the curve $X $ must have no real points.

We consider first curves $X$ of even genus $g\,=\, 2{\widehat g}$, with the cell 
decomposition given in Section 2.
Now consider mappings of this to $B\U_n$, on which the involution $\rho$ 
acts without fixed points. Choosing base points $q, \rho(q)$ on the 
equator of our standard sphere $S$ (on which $\rho$ acts as the 
antipodal map), we can assume that $x_0$ is mapped to $q$, and so 
$\sigma(x_0)$ to $\sigma(q)$. Going on to the one--skeleton, we can then 
move the cells $\gamma$, $\sigma(\gamma)$ to the equator of $S$, and 
homotope all 
the other $1$--cells to points. The two--cell $c_2$ can then be homotoped 
to a copy of the northern hemisphere of $S$, plus a certain number $k$ 
of copies of $S$; the cell $\sigma(c_2)$ becomes the southern hemisphere 
of $S$, plus the same number $k$ of copies of $S$. The total degree 
becomes $2k+1$, telling us that the first Chern of the resulting bundle 
must be odd, and that this classifies the bundles.

In turn, we consider curves of odd genus $g= 2{\widehat g}+1$, again with the cell 
decomposition given in Section 2. For our mappings of $X$ to $B\U_n$, we 
can again assume that $x_0$ is mapped to $q$, and so $\sigma(x_0)$ to 
$\sigma(q)$. The one--skeleton, apart from $\delta$, $\sigma(\delta)$ now 
contracts completely to the base points. One then has that the image of 
the closure of the union of $\delta$ and the $2$--cell $c_2$, spans a 
multiple $k$ of the standard two--sphere in $B\U_n$, and as before, the 
same must be true of their image under $\sigma$. This forces the total 
degree now to be even, and so the first Chern of the resulting bundle 
must be even, and this classifies the bundles.

{\it Subcase 2: Even rank $n=2k$}. In this case, one can have fixed 
points for the real structure; they get mapped under the classifying map 
to the fixed point set of $\rho$, which is $B\Sp_k$ in $B\U_n$. The 
classification now is quite simple: taking any of the cell 
decompositions given above, we can homotope their one--skeletons to a 
single point in $B\Sp_k$, and then the two two--cells each give a 
multiple $k$ of the standard sphere $S$; this gives degree $2k$ in 
total, so that the degree of the bundle must be even.

Summarising:
\begin{prop}
The quaternionic bundles of rank $n$ over a real Riemann surface of
genus $g$ are classified by their degree $k$, with all degrees 
satisfying $k +n(g-1) \,\equiv\, 0$ mod $(2)$ occurring.
\end{prop}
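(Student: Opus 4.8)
The plan is to read off the classification directly from the equivariant picture set up at the start of this section, treating the degree as the sole invariant and then matching the parity constraints of the three subcases to the single congruence. First I would recall that a quaternionic rank $n$ bundle on $X$ is the same datum as a $\sigma$-equivariant homotopy class of $\sigma$-equivariant map $X \to B\U_n$, where $B\U_n$ carries the quaternionic involution $\rho$. The fixed locus $\Fix(\rho)$ is empty when $n$ is odd --- whence the observation that $X$ can then carry no real points, since a real point would be forced to map into $\Fix(\rho) = \emptyset$ --- and equals $B\Sp_k$ when $n = 2k$. The first thing to establish is that the degree $c_1(E) \in \Z$ is a complete invariant. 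This follows as in the preceding subcases: since $B\U_n$ is simply connected the one-skeleton contracts, while $\pi_2(B\U_n) = \Z$ records the attaching of a two-cell. The decisive point, and the one that makes the quaternionic even-rank classification strictly simpler than the real one, is that $\pi_1(B\Sp_k) = \pi_0(\Sp_k) = 0$: the boundary circles of $X/\sigma$ map into the simply connected space $B\Sp_k$, so --- unlike the real case, where $\pi_1(B\Or_n) = \Z/2$ produced Stiefel--Whitney freedom --- no extra invariant survives over the real locus. For odd rank there is no real locus and the cells come in pairs $C, \sigma(C)$, one of which is contracted while the other is carried along by $\rho$.

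Next I would assemble the parity constraint. Using the cell decompositions of Section 2, in every case $X$ is built from two two-cells $c_2, \sigma(c_2)$ interchanged by $\sigma$ and glued along a one-skeleton, and I would map $c_2$ onto a hemisphere (or full copy) of the standard generator $S \subset B\U_n$ of $H_2(B\U_n) = \Z$ together with some number of extra copies of $S$, letting $\rho$ determine the image of $\sigma(c_2)$. The three computations carried out just above then give: for odd $n$ and even genus the total degree is forced odd; for odd $n$ and odd genus it is forced even; and for even $n$ it is forced even. It remains to verify that these three parities are exactly those cut out by $k + n(g-1) \equiv 0 \pmod 2$. This is a short check: when $n$ is odd the parity of $n(g-1)$ equals that of $g-1$, so the congruence demands $k$ odd for even $g$ and $k$ even for odd $g$; when $n$ is even, $n(g-1)$ is even and the congruence demands $k$ even for every $g$. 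These match the three cases exactly.

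Finally, for the realization statement I would note that the explicit maps above produce, for each admissible parity, every degree of that parity by varying the number of extra copies of $S$ attached to $c_2$ (and reversing orientation for the negative degrees), so that all $k$ with $k + n(g-1) \equiv 0 \pmod 2$ occur and no others; combined with the completeness of $c_1$ this finishes the classification. The step I expect to be the real crux is the bookkeeping that unifies the three subcases into the single congruence --- in particular tracking how the parity of $g$ feeds into the cell structure (through the extra equatorial edge present in the even-genus odd-rank case but absent in the odd-genus case) and confirming that the even-rank case with real points genuinely contributes no Stiefel--Whitney-type invariant, for which the vanishing of $\pi_1(B\Sp_k)$ is precisely what is needed.
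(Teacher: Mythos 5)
Your proposal follows the paper's own argument essentially verbatim: equivariant maps to $B\U_n$ with the quaternionic involution $\rho$, contraction of the one-skeleton (into $B\Sp_k$ over the real locus in even rank, using $\pi_1(B\Sp_k)=0$; cell by cell in the pairs $C,\sigma(C)$ in odd rank, where $\rho$ is free), and the hemisphere-versus-full-sphere bookkeeping on the two two-cells that yields total degree $2k+1$ for odd rank and even genus and $2k$ otherwise, exactly matching $k+n(g-1)\equiv 0 \pmod 2$. The only ingredient of the paper you omit is its independent Riemann--Roch verification of the necessity of the parity condition in the holomorphic setting, which the paper itself presents only as an alternative check.
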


An alternative proof of the necessity of the condition when the bundle is 
holomorphic is given as follows: assume that we have a quaternionic 
bundle $E$ of rank $n$, degree $k$; if one tensors this bundle by the 
real line bundle $L = d({\mathcal O}(p+\sigma(p)))$ of degree $2d$, then 
for high enough $d$ the first cohomology group of $E\otimes L$ vanishes, 
and the dimension of the zeroth cohomology is given by the Riemann--Roch 
formula: $h^0 = k + n(2d) + n(-g+1)$. The bundle $E\otimes L$ is still 
quaternionic, so the space of sections has a quaternionic structure and 
must be of even dimension.

\section{The fundamental group, its unitary representations, and flat bundles}

\subsection{Fundamental groups}

If the involution $\sigma$ is free, the  fundamental group of $X$ and 
the fundamental group of the quotient $X/\sigma$ fit into an exact sequence:
\begin{equation}\label{eqpi1}
 0\longrightarrow 
\pi_1(X)\longrightarrow\pi_1(X/\sigma)\longrightarrow\Z/2\longrightarrow 0,
\end{equation}

If the involution $\sigma$ has fixed points, the above no longer holds; 
for example, if one takes the Riemann sphere with the involution 
induced by conjugation, the quotient is a disk.
Whether $\sigma$ does have fixed points or not, however, the 
surface~$X/\sigma$ has a natural structure
of a $\Z/2$--orbifold, and we can still have the diagram \eqref{eqpi1}, 
provided we replace $\pi_1(X/\sigma)$ by the orbifold fundamental group 
$\Gamma$:
\begin{equation}\label{eqpi2}
0\longrightarrow 
\pi_1(X)\longrightarrow\Gamma\longrightarrow\Z/2\longrightarrow 0,
\end{equation}

The orbifold fundamental group of~$X/\sigma$ is also
known as the equivariant fundamental group of~$X$. Explicitly, we can 
define it as follows.

We choose a base point $x_0\, \in\, X$ such that $\sigma(x_0)\, \not=\,
x_0$. Consider the space of all homotopy classes of continuous
paths $\gamma\, :\, [0\, ,1] \, \longrightarrow\, X$, where homotopies 
fix the end points,  such that
\begin{itemize}
\item{} $\gamma(0)\, =\, x_0$, and
\item $\gamma(1)\, \in\, \{x_0\, ,\sigma(x_0)\}$.
\end{itemize}

Thus the homotopy classes split into two disjoint sets, depending on 
the image $\gamma(1)$: the first is simply the fundamental group 
$\pi_1(X) = \pi_1(X,x_0)$, and the second is the set $Path(X)= 
Path(X,x_0)$ of all homotopy classes of paths from $x_0$ to 
$\sigma(x_0)$. The disjoint union of $\pi_1(X)$ and $Path(X)$ will be 
our group $\Gamma$, with the following composition rule: let
$
\gamma_1\, , \gamma_2\, \in\,\Gamma\, .
$
If $\gamma_1(1) \, =\, x_0$, then define
$$
\gamma_2\gamma_1\, =\, \gamma_2\circ \gamma_1,
$$
where $\circ$ is the usual composition of paths. If $\gamma_1(1) \, =\,
\sigma(x_0)$, then define
$$
\gamma_2\gamma_1\, :=\, \sigma(\gamma_2)\circ \gamma_1\, .
$$
Note that for $\gamma\in Path(X)$, we have
\begin{equation}\label{inv.}
\gamma^{-1}(t)\, =\, (\sigma(\gamma))(1-t)
\end{equation}
It follows that for any two paths
$
\gamma_1\, ,\gamma_2\, \in\, Path(X)
$

\begin{equation}\label{inv2.}
\gamma^{-1}_2\gamma_1\, =\, \gamma^{-1}_2\circ\gamma_1
\end{equation}
Finally, mapping $\pi_1(X)$ to zero and $Path(X)$ to one, we have the 
exact sequence \eqref{eqpi2}, as desired.

One can give 
explicit presentations of $\Gamma$. These are computed in ~\cite{EFG}, 
which we  recall; we will also give some alternate presentations, more 
compatible with the standard ones of $X$.

{\it Case 0:  $X$ is of type~0.} The results of ~\cite{EFG} give:
$$
\Gamma=\langle\delta_1,\ldots,\delta_{g+1}\suchthat
\delta_1^2\cdots\delta_{g+1}^2=1\rangle,
$$
where~$g$ is the genus of the curve.  All the
generators~$\delta_i$ belong to $Path(X)$. 

Alternatively, one can take the standard basis for $\pi_1(X)$ produced 
in 
Section 2; the group $\Gamma$ has $\gamma$ as an extra generator. In 
the even genus ($g = 2{\widehat g}$) case:
$$
\Gamma=\langle \{\alpha_i,\beta_i\}_{i=1}^{2{\widehat g}},
\gamma\suchthat
\prod_{i-1}^{\widehat g}[\alpha_i,\beta_i])\gamma^2= 1, 
\prod_{i=1}^{2{\widehat g}}[\alpha_i,\beta_i]=1 \rangle,
$$
 and in the odd genus ($g = 2{\widehat g}+1$)case:
$$
\Gamma=\langle\{\alpha_i, \beta_i\}_{i= 1}^{2{\widehat g}+1}, 
\gamma\suchthat
 \gamma^2= \alpha_{{\widehat g}+1},  \prod_{i=1}^{2{\widehat g}+1}[\alpha_i,\beta_i]=1 \rangle,
$$

{\it Case I: ~$X$ is of type~I}. Let~$r$ be the number of connected
components of~$X(\R)$. Equivalently, $r$ is the number of boundary
components of~$X/\sigma$. Let~$\widehat g$ be the genus of~$X/\sigma$. 
Since the
``double'' of~$X/\sigma$ is a compact connected topological surface
without boundary of genus~$g$, one has~$2{\widehat g}+r\,=\,g+1$.  The 
orbifold
fundamental group $\Gamma$ is generated by
$\delta_1,\ldots,\delta_{r+{\widehat g}},\eta_1,\ldots,\eta_{r+{\widehat 
g}}$ subject to
the relations (~\cite{EFG})
\begin{gather*}
\eta_i^2=1 \text{ and } [\delta_i,\eta_i]=1, \text{ for
  $i=1,\ldots,r$}, \text{ and}\\
\delta_1\cdots\delta_r\cdot[\delta_{r+1},\eta_{r+1}]\cdots
[\delta_{r+{\widehat g}},\eta_{r+{\widehat g}}]=1.
\end{gather*}
The generators~$\eta_1,\ldots,\eta_r$ belong to $Path(X)$. The other
generators are contained in the subgroup~$\pi_1(X)$.

Alternately, one can choose the base point $x_0$ on the fixed point 
set, and let $\gamma$ be the constant path from $x_0$ to $\sigma(x_0)$. 
Then, in $\Gamma=\pi_1(X)\sqcup Path(X)$, $\gamma^2=1$ and 
$\gamma\alpha = \sigma(\alpha)\gamma$ for any element $\alpha$ of 
$\pi_1(X)$. The generators are then given by standard generators for 
the fundamental group $\pi_1(X)$, plus $\gamma$:
$$
\Gamma=\langle\{\alpha_i, \beta_i\}_{i= 1}^g, \gamma\suchthat
 \gamma^2= 1,  \prod_{i=1}^{g}[\alpha_i,\beta_i]=1, 
\gamma\alpha_i = 
\sigma(\alpha_i)\gamma, \gamma\beta_i = \sigma(\beta_i)\gamma\rangle,
$$

{\it Case II: $X$ is of type~II}. Let~$r$ be the number of connected
components of~$X(\R)$. Again, $r$ is also equal to the number of
boundary components of~$X/\sigma$. Let $k$ be the genus 
of~$X/\sigma$.  It
is understood here that the genus of a nonorientable surfaces is equal
to the number of cross--caps. In any case, it is easily seen
that $k+r=g+1$.  Then the orbifold fundamental group $\Gamma$ 
is
generated by
$\delta_1,\ldots,\delta_r,\eta_1,\ldots,\eta_{r+k}$
subject to the relations
\begin{gather*}
\eta_i^2=1 \text{ and } [\delta_i,\eta_i]=1, \text{ for
  $i=1,\ldots,r$}, \text{ and}\\
\delta_1\cdots\delta_r\cdot\eta_{r+1}^2\cdots\eta_{r+k}^2=1.
\end{gather*}
The generators~$\eta_1,\ldots,\eta_{r+k}$ belong to 
$Path(X)$. The other
generators are contained in the subgroup~$\pi_1(X)$(~\cite{EFG}).

Alternately, one can give the same description as in case I:
$$
\Gamma=\langle\{\alpha_i, \beta_i\}_{i= 1}^{g}, \gamma\suchthat
 \gamma^2= 1,  \prod_{i=1}^{g}[\alpha_i,\beta_i]=1, 
\gamma\alpha_i = 
\sigma(\alpha_i)\gamma, \gamma\beta_i = \sigma(\beta_i)\gamma\rangle,
$$
Note that these are not quite presentations, as we have not specified 
the action of $\sigma$.

\subsection{Real and quaternionic unitary representations}

We are interested in  unitary representations of
$\pi_1(X, x_0)$ which extend to $\Gamma$ in an appropriate way.

Let the Galois group~$\Gal(\C/\R)=\Z/2$ act naturally on the unitary
group $\U_n$. Such an action gives rise to a semi--direct product
of~$\U_n$ and $\Gal(\C/\R)$, which we will denote by $\EU_n$. We
call~$\EU_n$ the \emph{extended} unitary group. It acts naturally
on~$\C^n$ by complex linear and conjugate linear unitary
automorphisms.  If we denote complex conjugation
$(z_1\, ,\dots\, ,z_n)\, \longrightarrow\,
(\overline{z_1}\, ,\dots\, ,\overline{z_n})$
of ${\mathbb C}^n$ by $\tau$, then $\EU_n$
decomposes into two components
$$
\EU_n=\U_n\union\U_n\tau\, .
$$
Since~$\U_n$ is a normal subgroup in~$\EU_n$, one has a short exact
sequence
\begin{equation}\label{eqEU}
e\,\longrightarrow\, \U_n\,\longrightarrow\,\EU_n\,
\stackrel{\xi}{\longrightarrow}\, \Z/2\, \longrightarrow\, e\, ,
\end{equation}
 
\begin{definition}

Let $\rho\in {\rm Hom}(\pi_1(X),\U_n)$ be a representation of 
$\pi_1(X)$.

We say that the representation $\rho$ has a  {\it real extension  
$\widehat\rho$}\, if it extends as a representation $\widehat\rho$ from 
$\pi_1(X)$ to $\Gamma$ in such a way that the diagram
$$
\begin{matrix}
e & \longrightarrow &\pi_1(X)&\longrightarrow &
\Gamma & \longrightarrow &
{\mathbb Z}/2{\mathbb Z} & \longrightarrow & e\\
&&\Big\downarrow{\rho} &&\,~\Big\downarrow{\widehat\rho}&& \Vert\\
e & \longrightarrow & \text{U}(n) &\longrightarrow &
\widetilde{\text{U}}(n) & \longrightarrow &
{\mathbb Z}/2{\mathbb Z} & \longrightarrow & e
\end{matrix}
$$
is commutative. The extension $\widehat\rho$ will be referred to as a 
{\it real unitary representation}.

We say that the representation $\rho$ has a {\it quaternionic extension 
$\widehat\rho$}\, if it extends as a map from $\Gamma$ to 
$\widetilde{\text{U}}(n)$ in such a way that the diagram
$$
\begin{matrix}
e & \longrightarrow &\pi_1(X)&\longrightarrow &
\Gamma & \stackrel{\xi}{\longrightarrow} &
{\mathbb Z}/2 & \longrightarrow & e\\
&&\Big\downarrow{\rho} &&\,~\Big\downarrow{\hat\rho}&& \Vert\\
e & \longrightarrow & \text{U}(n) &\longrightarrow &
\widetilde{\text{U}}(n) & \longrightarrow &
{\mathbb Z}/2  & \longrightarrow & e
\end{matrix}
$$
commutes, and that $\rho(a)\rho(b)= (-1)^{\xi(a)\xi(b)} \rho(ab)$, where 
one takes now ${\mathbb Z}/2$ to be the set $\{0\, ,1\}$. The extension 
$\widehat\rho$ will be referred to as a {\it quaternionic unitary 
representation}.

\end{definition}

\begin{prop}\label{lem1}
For two extensions $\widehat\rho$ and $\widetilde\rho$, consider for $x\in Path(X)$ the unitary matrix $C(x) 
= \widehat{\rho}(x)^{-1}\widetilde{\rho}(x)$.
This $C(x)= C$ is independent of $x$ in $Path(X)$. 
Furthermore, $C$ commutes with the representation
$\rho$ on $\pi_1(X,x_0)$.

If $\rho\, :\, \pi_1(X,x_0)\, \longrightarrow\, {\rm U}(n)$ is 
irreducible, it cannot simultaneously have a real and a
quaternionic extension.

\end{prop}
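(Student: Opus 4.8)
The plan is to exploit the decomposition $\Gamma = \pi_1(X)\sqcup Path(X)$ together with the fact that both $\widehat\rho$ and $\widetilde\rho$ restrict to the \emph{same} representation $\rho$ on the normal subgroup $\pi_1(X)$; they can differ only in the values they assign to elements of $Path(X)$. The key structural fact is that $Path(X)$ is a bitorsor under $\pi_1(X)$ (it is nonempty since $\sigma(x_0)\neq x_0$): for any $x,y\in Path(X)$ the elements $b:=yx^{-1}$ and $a:=x^{-1}y$ lie in $\pi_1(X)$, by the composition rules (\ref{inv.}) and (\ref{inv2.}), and one has $y=bx=xa$. The entire argument consists in feeding these two factorisations through the (possibly twisted) homomorphism property of the extensions.

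First I would prove that $C(x)$ is constant. Writing $y=bx$ with $b\in\pi_1(X)$ and applying each extension, I observe that whether the extension is real (a genuine homomorphism) or quaternionic (twisted by $(-1)^{\xi(\cdot)\xi(\cdot)}$), the twisting factor $(-1)^{\xi(b)\xi(x)}$ equals $1$ because $\xi(b)=0$. Hence $\widehat\rho(y)=\rho(b)\widehat\rho(x)$ and $\widetilde\rho(y)=\rho(b)\widetilde\rho(x)$. Substituting into $C(y)=\widehat\rho(y)^{-1}\widetilde\rho(y)$, the two copies of $\rho(b)$ cancel and $C(y)=C(x)$. This is the crux of the independence statement: placing the $\pi_1(X)$-factor on the \emph{left} makes it drop out, whereas a factor on the right would instead survive as a conjugation.

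Next, for the commutation claim I use the other factorisation $y=xa$ with $a\in\pi_1(X)$, so that the $\pi_1(X)$-factor now sits on the right; the twist is again trivial (this time $\xi(a)=0$), giving $\widehat\rho(y)=\widehat\rho(x)\rho(a)$ and likewise for $\widetilde\rho$, whence $C(y)=\rho(a)^{-1}C(x)\rho(a)$. Since $C$ is already known to be path-independent, $C(y)=C(x)=C$, so $C\rho(a)=\rho(a)C$ for every $a\in\pi_1(X)$. I would also record here that $C$ is an honest complex-linear unitary: both $\widehat\rho(x)$ and $\widetilde\rho(x)$ lie in the non-identity component $\U_n\tau$ of $\EU_n$, so their composite $C=\widehat\rho(x)^{-1}\widetilde\rho(x)$ is complex linear and unitary.

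Finally, suppose $\rho$ is irreducible, with $\widehat\rho$ real and $\widetilde\rho$ quaternionic. By Schur's lemma applied to the commuting unitary $C$ we get $C=\lambda\,\mathrm{Id}$ with $|\lambda|=1$, and the contradiction comes from squaring. Choosing $x\in Path(X)$, one has $x^{2}\in\pi_1(X)$, so both extensions send it to $\rho(x^{2})$; the real condition gives $\widehat\rho(x)^{2}=\widehat\rho(x^{2})=\rho(x^{2})$, while the quaternionic twist gives $\widetilde\rho(x)^{2}=(-1)^{\xi(x)^{2}}\widetilde\rho(x^{2})=-\rho(x^{2})$. On the other hand $\widetilde\rho(x)=\widehat\rho(x)\,C=\bar\lambda\,\widehat\rho(x)$ (the conjugation of $\lambda$ appearing because $\widehat\rho(x)$ is conjugate-linear), and squaring this conjugate-linear map yields $\widetilde\rho(x)^{2}=|\lambda|^{2}\,\widehat\rho(x)^{2}=\widehat\rho(x)^{2}$. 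Comparing the two evaluations forces $\rho(x^{2})=-\rho(x^{2})$, impossible since $\rho(x^{2})$ is invertible. The subtlety to watch throughout is the conjugate-linearity of the $Path(X)$-values: scalars get conjugated when passed across $\widehat\rho(x)$, and it is exactly the interaction of this conjugation with the sign in the quaternionic relation that produces the contradiction, so this bookkeeping is where I expect the main work to lie.
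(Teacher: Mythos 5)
Your proof is correct and follows essentially the same route as the paper's: both arguments rest on the fact that the two extensions agree on $\pi_1(X)$, use the torsor structure of $Path(X)$ over $\pi_1(X)$ to get constancy and commutation of $C$, and derive the final contradiction from $x^2\in\pi_1(X)$ together with the conjugate-linearity of the $Path(X)$-values (your $|\lambda|^2$ computation is the paper's $-\overline{C}C=1$ in different notation). The only differences are cosmetic choices of left versus right factorisation.
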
 

\begin{proof}For $x, y\in Path(X)$, we have
$$\widehat{\rho}(x)^{-1}\widetilde{\rho}(x)(\widehat{\rho}(y)^{-1}
\widetilde{\rho}(y))^{-1}= 
\widehat{\rho}(x)^{-1}\widetilde{\rho}(xy^{-1})\widehat{\rho}(y)= 
\widehat{\rho}(x)^{-1}\widehat{\rho}(xy^{-1})\widehat{\rho}(y)=1
$$
(note that in the quaternionic case, $\widehat{\rho}(x)^{-1} = 
-\widehat{\rho}(x^{-1})$).
We have, for $z$ in $\pi_1(X)$, that 
$$
\widehat{\rho}(x)^{-1}\widetilde{\rho}(x)\rho(z)(\widehat{\rho}(x)^{-1}
\widetilde{\rho}(x))^{-1} \,=\,
\widehat{\rho}(x)^{-1}\widetilde{\rho}(xzx^{-1})\widehat{\rho}(x)
$$
$$
=\, \widehat{\rho}(x)^{-1}\widehat{\rho}(xzx^{-1})\widehat{\rho}(x)
\,=\, \widehat{\rho}(z)=\rho(z)\, ,
$$ 
so that $C$ commutes with the representation.

Let $\widehat\rho$ be real and let $\widetilde\rho$ be quaternionic. 
For $x\in Path(X)$, we have $x^2\,\in\, \pi_1(X,x_0)$. Hence
if $\widehat{\rho}(x)^{-1}\circ \widetilde{\rho}(x)\, =\,
C\cdot \text{Id}$,
$$
1 = \widehat{\rho}(x^{-2})\widetilde{\rho}(x^2)= 
-\widehat{\rho}(x)^{-1}(C\cdot) \widetilde{\rho}(x)) =
-\overline{C}C\, ,
$$
a contradiction.
\end{proof}

\subsection{Unitary representations and flat bundles}

Now let 
$$\text{Rep}_\bbr(X, n)$$
denote the family of real representations of $\Gamma$, and 
$$\text{Rep}_\bbh(X, n)$$
denote the family of quaternionic representations of $\Gamma$.

Two homomorphisms
$\rho\, ,\rho'\, \in\, \text{Rep}_{\bbr}(X, n)$ (or both
in $\text{Rep}_{\bbh}(X, n)$)
are called \textit{equivalent} if there is an element 
$A\, \in\, \text{U}_n$ such that
$$
\rho'(g)\, =\, A\rho(g)A^{-1}, 
$$
for all $g\, \in\, \Gamma$.
The set of equivalence classes of elements of $\text{Rep}_{\bbr}(X, n)$, 
(respectively, $\text{Rep}_{\bbh}(X, n)$)
will be denoted by $\widetilde{\rm Rep}_{\bbr}(X, n)$ (respectively, 
$\widetilde{\rm Rep}_{\bbh}(X, n)$).

\begin{thm}
There is a natural bijective correspondence between the
equivalence classes of elements of $\widetilde{\rm Rep}_{\bbr}(X, n)$
(respectively, $\widetilde{\rm Rep}_{\bbh}(X, n)$) and the
isomorphism classes of triples $(V, \nabla^V\, \sigma)$, where
$(V, \nabla^V)$ is a unitary flat vector bundle over
$X$ of rank $n$ and
$$
\sigma\, :\, V\, \longrightarrow\, V
$$
is a real(respectively, quaternionic) structure on $V$ such that 
$\nabla^V$ is 
$\sigma$--equivariant:
$$ \nabla^V(x) \,=\, (\widetilde\sigma^{-1})^*\nabla^V(\sigma(x))$$
\end{thm}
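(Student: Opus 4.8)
The plan is to upgrade the classical monodromy correspondence between unitary representations of $\pi_1(X,x_0)$ and unitary flat vector bundles on $X$ to an equivalence that remembers the involution. Throughout I work with the universal cover $p\colon \widetilde X \to X$, on which $\pi_1(X)$ acts by deck transformations and on which each class $x \in Path(X)$ acts by a lift $\tilde\sigma_x$ of the anti-holomorphic involution $\sigma$, so that $\Gamma = \pi_1(X) \sqcup Path(X)$ is realized as a group of automorphisms of $\widetilde X$ covering the $\Z/2$--action on $X$. Given a real or quaternionic extension $\widehat\rho$, I first restrict to $\rho = \widehat\rho|_{\pi_1(X)}\colon \pi_1(X) \to \U_n$ and form the associated flat unitary bundle $V = \widetilde X \times_\rho \C^n$ with the flat connection $\nabla^V$ descending from the trivial connection on $\widetilde X \times \C^n$; this is the classical input and requires no new work.

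The new ingredient is the lift $\widetilde\sigma$. Fixing $x \in Path(X)$, write $\widehat\rho(x) = A_x\tau \in \U_n\tau$, the antilinear component of $\EU_n$, and set $\widetilde\sigma([\tilde q, v]) = [\tilde\sigma_x(\tilde q), A_x\bar v]$. First I would check this is independent of the representative $(\tilde q, v)$ and of the choice of $x$: this is a direct consequence of the composition rule $\gamma_2\gamma_1 = \sigma(\gamma_2)\circ\gamma_1$ together with the (twisted) homomorphism property of $\widehat\rho$ — the same computation that, applied to two extensions of a single $\rho$, yields the constancy of $C(x) = \widehat\rho(x)^{-1}\widetilde\rho(x)$ in Proposition~\ref{lem1}. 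Because $A_x\tau$ is antilinear, $\widetilde\sigma$ is fiberwise antilinear and covers $\sigma$, and anti-holomorphicity follows from that of $\tilde\sigma_x$. The identity $\widetilde\sigma^2 = \pm\,\mathrm{Id}$ is then a direct computation: for $x \in Path(X)$ one has $x^2 \in \pi_1(X)$ and $\widehat\rho(x)^2 = A_x\tau A_x\tau = A_x\overline{A_x} \in \U_n$; in the real case $\widehat\rho(x)^2 = \widehat\rho(x^2) = \rho(x^2)$, which combines with the deck part $\tilde\sigma_x^2$ (also governed by $x^2$) to give $\widetilde\sigma^2 = \mathrm{Id}$, whereas in the quaternionic case the twisting sign forces $\widehat\rho(x)^2 = -\widehat\rho(x^2) = -\rho(x^2)$ and hence $\widetilde\sigma^2 = -\mathrm{Id}$. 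Finally, $\sigma$--equivariance of $\nabla^V$ is automatic, since $\widetilde\sigma$ is assembled from a lift of $\sigma$ and a constant matrix, both of which preserve the flat structure, so $\nabla^V(x) = (\widetilde\sigma^{-1})^*\nabla^V(\sigma(x))$.

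For the inverse map I would recover $\widehat\rho$ from a triple $(V,\nabla^V,\widetilde\sigma)$ by holonomy. Fixing a unitary frame of $V_{x_0}$, parallel transport of $\nabla^V$ along loops gives $\rho\colon \pi_1(X) \to \U_n$; for $x \in Path(X)$, a path from $x_0$ to $\sigma(x_0)$, I compose parallel transport along $x$ with the antilinear isomorphism $\widetilde\sigma\colon V_{\sigma(x_0)} \to V_{x_0}$ to obtain an antilinear unitary endomorphism of $V_{x_0}$, i.e.\ an element of $\U_n\tau \subset \EU_n$, and declare this to be $\widehat\rho(x)$. That $\widehat\rho$ is a homomorphism (resp.\ satisfies the quaternionic cocycle) reduces, via $\sigma$--equivariance of $\nabla^V$, to $\widetilde\sigma^2 = +\mathrm{Id}$ (resp.\ $-\mathrm{Id}$), matching the two cases in the definition of real and quaternionic unitary representations. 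Changing the frame by $A \in \U_n$ conjugates the whole representation, so equivalent frames yield equivalent extensions and, conversely, an isomorphism of triples (an isometry of $(V,\nabla^V)$ intertwining the involutions) induces conjugation of representations; this shows both constructions descend to $\widetilde{\rm Rep}_\bbr(X,n)$ and $\widetilde{\rm Rep}_\bbh(X,n)$ and are mutually inverse.

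I expect the main obstacle to be the bookkeeping forced by the antilinearity of $\widehat\rho$ on $Path(X)$: because elements of $\U_n\tau$ act conjugate-linearly, every product $\widehat\rho(x)\widehat\rho(y)$ introduces a complex conjugation, and this must be reconciled with the twisted composition law of $\Gamma$ (in which $\sigma$ is applied to the second factor) and with the extra sign in the quaternionic case. Getting all of these conventions simultaneously consistent — so that well-definedness of $\widetilde\sigma$, the sign in $\widetilde\sigma^2 = \pm\,\mathrm{Id}$, and the cocycle identity for $\widehat\rho$ all agree — is the delicate part; once the conventions are pinned down, the remaining verifications are routine.
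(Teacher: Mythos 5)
Your proposal is correct and follows essentially the same route as the paper: restrict $\widehat\rho$ to $\pi_1(X)$ to get the flat bundle, use the value of $\widehat\rho$ on $Path(X)$ to define the antilinear lift $\widetilde\sigma$ (checking independence of the chosen path and deducing $\widetilde\sigma^2=\pm\mathrm{Id}$ from $x^2\in\pi_1(X)$ and the twisted homomorphism property), and invert via holonomy composed with the involution at the base point. The only difference is presentational — you phrase the construction on the associated bundle $\widetilde X\times_\rho\C^n$ where the paper works directly with parallel transport operators $T_\gamma$ — and the sign bookkeeping you flag as delicate is exactly what the paper's Lemmas on path-independence and on $\widetilde\sigma_{\sigma(x)}\widetilde\sigma_x=\pm 1$ carry out.
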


\begin{proof}
{\it 1. From representation to bundle}. Let us start with a 
representation $\rho$ of $\Gamma$. Its restriction to $\pi_1(X)$ 
defines in the usual fashion a flat bundle over $X$: one starts with 
the trivial flat unitary bundle $\widetilde X\times \bbc^n$ over the 
universal cover $\widetilde X$, and if $g\in \pi_1(X)$ is represented by 
a 
deck transformation, one identifies $p\times \bbc^n$ with 
$g(p)\times\bbc^n$ in such a way that the trivial flat bundle descends 
to a bundle $E$ on $X$ with monodromy representation given by $\rho$. 

If $\gamma$ is any homotopy class of  paths from $x$ to $y$, let 
$T_\gamma$ represent the parallel transport from $x$ to $y$. 
If $x$ and $y$ are both the base point $x_0$, one can identify 
$$T_\gamma = \rho(\gamma): E_{x_0}\longrightarrow E_{x_0}\, .$$
We define the lift $\widetilde\sigma$ of $\sigma$ to $E$ as follows. At 
$x_0$, we choose a path $\gamma$ from $x_0$ to $\sigma(x_0)$, and set: 
\begin{eqnarray} 
\widetilde{\sigma}_{x_0}: E_{x_0}&\longrightarrow &E_{\sigma(x_0)}\\
v&\longmapsto & T_\gamma\ \rho(\gamma)^{-1}(v); \end{eqnarray}

\begin{lem}\label{lem2}
The isomorphism $\widetilde{\sigma}_{x_0}$ is independent of the choice
of the path $\gamma$ connecting $x_0$ to $\sigma(x_0)$.
\end{lem}

\begin{proof}
Take any other path $\delta\, :\, [0\, ,1]\, \longrightarrow\, X$
such that $\delta(0)\, =\, x_0$ and $\delta(1)\, =\,
\sigma(x_0)$. Consider the element
$\delta^{-1}\gamma \, =\, \delta^{-1}\circ\gamma\, \in\, 
\pi_1$.
Since $\rho(\delta^{-1}\gamma) \, =\, \rho(\delta)^{-1}
\rho(\gamma)$, and $T_{\delta^{-1}\circ\gamma} = T_{\delta^{-1}}\circ 
T_{\gamma}$, the result follows.
\end{proof}

For  other $x\in X$, choose any path $\delta$ from $x$ to $x_0$, and set
\begin{eqnarray} 
\widetilde{\sigma}_x: E_{x}&\longrightarrow &E_{\sigma(x)}\\
 v&\longmapsto &T_{\sigma(\delta)}^{-1}\widetilde{\sigma}_{x_0}T_\delta   
\end{eqnarray}

\begin{lem}\label{lem3}
The isomorphism $\widetilde\sigma_{x}$ is independent of the choice
of the path $\delta$ connecting   $x$ to $x_0$.
\end{lem}
\begin{proof}
If $\delta, \delta'$ are two paths, one wants to show that 
$$\widetilde\sigma_{x_0} = T_{\sigma(\delta')} 
T_{\sigma(\delta)}^{-1}\widetilde\sigma_{x_0}T_\delta T_{\delta'}^{-1}$$
This means that we want
$$\widetilde\sigma_{x_0} = T_{\sigma(\delta')} T_{\sigma(\delta)}^{-1} 
T_\gamma \rho(\gamma)^{-1}\rho(\delta \delta'^{-1})$$
This however is precisely the definition of $\widetilde\sigma_{x_0}$ 
using the path 
$\sigma(\delta')\circ\sigma(\delta)^{-1}\circ\gamma$
\end{proof}

\begin{lem}\label{invtparallel} Parallel transport is invariant under 
$\widetilde\sigma$: if $\eta$ joins $x$ to $y$, then 
$$ T_{\sigma(\eta)} = 
\widetilde\sigma_{y}T_\eta\widetilde\sigma_x^{-1}\label{sigmapartrans}$$
\end{lem}

This is just a consequence of the definition of $\widetilde\sigma_{y}$ 
by 
parallel transport from the base point.

\begin{lem}
$\widetilde{\sigma}_{\sigma(x)} \widetilde{\sigma}_x = 1$, if the 
representation 
is real, and $-1$, if it is quaternionic.
\end{lem}

\begin{proof} When $x=x_0$, one has
\begin{eqnarray}
\widetilde{\sigma}_{\sigma(x_0)} \widetilde{\sigma}_{x_0} &= 
T_{\sigma(\gamma^{-1})}^{-1} 
T_\gamma\rho(\gamma)^{-1}T_\gamma^{-1}T_\gamma\rho(\gamma)^{-1}\\
&= \rho(\gamma^2)\rho(\gamma)^{-1}\rho(\gamma)^{-1}\end{eqnarray}
which is +1 if the representation is real, and -1 if it is quaternionic.
The proof for other points follows by parallel transport.
\end{proof}

We have now defined a flat bundle with the right properties  from the 
representation. We now must check that the result is invariant under 
the equivalence of representations. This is straightforward.

{\it 2. From bundle to representation}

We now turn things around, and take a real (respectively, quaternionic) 
bundle 
$(E,\widetilde\sigma)$, along with a $\widetilde\sigma$--invariant 
connection 
$\nabla$. This then has invariant parallel transport, as defined in 
\eqref{invtparallel}. Choosing a basis for the fiber $E_{x_0}$ over the 
base point $x_0$, we can, as usual, define  the holonomy representation 
$\rho: \pi_1(X)\longrightarrow \U_n$:
$$\rho(g) \,=\, T_g$$
 One must then extend this to $\Gamma$. 
Inverting the procedure of part 1. of the proof gives, for a path 
$\gamma$ from $x_0$ to 
$\sigma(x_0)$:
$$ \rho(\gamma) \,=\, \widetilde{\sigma}_{x_0}^{-1} T_\gamma\, .$$
It is immediate that this is anti--linear. Composing two elements of 
$Path(X)$ gives:
\begin{eqnarray}\rho(\gamma)\rho(\gamma')&= 
\widetilde{\sigma}_{x_0}^{-1} 
T_\gamma\widetilde{\sigma}_{x_0}^{-1} T_{\gamma'}\\
&= 
\widetilde{\sigma}_{x_0}^{-1}\widetilde{\sigma}_{\sigma(x_0)}^{-1}
T_{\sigma(\gamma)} T_{\gamma'}\\
&=\widetilde{\sigma}_{x_0}^{-1}\widetilde{\sigma}_{\sigma(x_0)}^{-1} 
T_{\sigma(\gamma)\circ  \gamma'}\\ & = \widetilde{\sigma}_{x_0}^{-1}
\widetilde{\sigma}_{\sigma(x_0)}^{-1} \rho(\gamma\gamma'),\end{eqnarray}
which is $\pm \rho(\gamma\gamma')$ depending on whether the bundle is 
real or quaternionic. 

One checks in a similar fashion that the other compositions (when one 
of the two elements lies in $\pi_1(X)$) satisfy the right relations. 
Thus real bundles give real representations, and quaternionic bundles 
give  quaternionic ones. We note that the global conjugations of $\rho$ 
by a unitary matrix correspond to changes in our unitary trivialisation 
 of $E_{x_0}$; these changes give rise to equivalent 
representations. This completes the proof of the theorem.
\end{proof}

\subsection{Representations and flat connections on $S_0$.}

The correspondence of real or quaternionic representations with flat 
connections enables (or at least makes more evident) a description of these representations in terms of representations of the fundamental group of the surface $S_0$. 

\begin{itemize}
\item{} {\it Type 0, even genus; real bundles.} In this case, as we have seen, one has that $S_0$ is a once punctured surface of genus $g/2$, with two marked points chosen on the boundary which are interchanged by the real structure. We note that a trivialisation at one of the marked points gives a trivialisation at the other. Restricting flat connections to $S_0$, and noting that apart from some constraints on the boundary, connections on $S_0$ determine real connections $X$, we obtain, by integrating, for the moduli space of flat connections:
\begin{equation}{\ca M}_\bbr =  \Big\{\{A_i, B_i\}_{i= 1}^{g/2}, C\in 
\U_n| 
\prod_{i=1}^{g/2}[A_i,B_i]C\overline{C} =1\Big\}\Big/\U_n\label{rep0even}\end{equation}
 The $\U_n$ action is by
\begin{equation} (A_i, B_i, C)\longmapsto 
(gA_ig^{-1},gB_ig^{-1},gC\overline{g}^{-1})\end{equation}

\bigskip

\item{} {\it Type 0, odd genus; real bundles.} Here, $S_0$ is a twice punctured surface of genus $(g-1)/2$, with  one  marked point  chosen on each boundary, which are interchanged by the real structure. The moduli space is then 
\begin{equation}{\ca M}_\bbr =  \Big\{\{A_i, B_i\}_{i= 1}^{(g-1)/2}, 
C, D\in \U_n| 
\prod_{i=1}^{g/2}[A_i,B_i]CD\overline{C}D^{-1} =1\Big\}\Big/\U_n\label{rep0odd}\end{equation} The $\U_n$ action is by
\begin{equation} (A_i, B_i, C,D)\longmapsto (gA_ig^{-1},gB_ig^{-1},gCg^{-1}\, , 
 gD\overline{g}^{-1})\end{equation} \bigskip
 
\item{} {\it Type I; real bundles.} Here, $S_0$ is an $r$--punctured surface of genus 
$\hat g= (g-r+1)/2$, with  one  marked point  chosen on each boundary; one has 
\begin{eqnarray}{\ca M}_\bbr& = & \Big\{\{A_i, B_i\}_{i= 1}^{\widehat 
g}, \{D_j\}_{j=2}^r\in \U_n,  \{C_j\}_{j= 1}^{r},\in 
\Or_n|\nonumber \\ &&\quad\quad  
\prod_{i=1}^{\widehat g}[A_i,B_i]C_1\prod_{i=2}^{r}D_jC_jD_j^{-1}  =1\Big\}\Big/(\Or_n)^r.\label{repI}\end{eqnarray}
 The group  action is by
\begin{equation} (g_1,\cdots ,g_r)(A_i, B_i, C_j,D_j)\longmapsto 
(g_1A_ig_1^{-1},g_1B_ig_1^{-1},g_jC_jg_j^{-1}, g_1D_jg_j^{-1})\end{equation}
\bigskip

\item{} {\it Type II; real bundles.} Here, $S_0$ is an $r+1$--punctured surface of 
genus $\hat g= (g-r)/2$, with  one  marked point  chosen on each boundary; one has 
\begin{eqnarray}{\ca M}_\bbr& =  & \Big\{\{A_i, B_i\}_{i= 
1}^{\widehat g}, C_0, 
\{D_j\}_{j=1}^{r}\in \U_n,\,  \{C_j\}_{j= 1}^{r}\in 
\Or_n|\nonumber \\ &&\quad\quad 
\prod_{i=1}^{\widehat g}[A_i,B_i]C_0\overline{C}_0\prod_{i=1}^{r}D_jC_jD_j^{-1}  =1\Big\}\Big/\U_n\times (\Or_n)^r.\label{repII}\end{eqnarray}
 The group  action is by
\begin{equation} (g_0, g_1,\cdots ,g_r)(A_i, B_i, C_j,D_j)\longmapsto 
(g_0A_ig_0^{-1},g_0B_ig_0^{-1}, g_0 C_1\overline{g}_0^{-1}, g_jC_jg_j^{-1}, g_0D_jg_j^{-1})\end{equation}
\bigskip

\item{} {\it Quaternionic bundles} In all cases the formulae are the 
same, except that one replaces conjugation ($C\longmapsto \overline C$) by 
minus conjugation ($C\longmapsto -\overline C$) in the formulae, and $\Or_n$ 
by $\Sp_{n/2}$, when they occur in 
\eqref{rep0even}, \eqref{rep0odd}, \eqref{repI}, \eqref{repII}.
\end{itemize}

\subsection{Central extensions}

So far we have discussed flat bundles, corresponding to representations 
of the orbifold fundamental group; these bundles of necessity have 
trivial degree. For bundles of arbitrary degree $k$, one can consider 
projectively flat connections, or alternately connections on the 
complement of a point with central monodromy around that point which is 
given by $\exp(2\pi\sqrt{-1}k/n)$. In our case, it is in fact more 
convenient 
to choose a pair of points $p,\sigma(p)$, with $p\neq \sigma(p)$; the 
constraint on monodromy is then that the monodromy around each point be 
given by $\exp(\pi\sqrt{-1}k/n)$. (Note that if $L$ is a small loop 
around $p$, 
with monodromy $\exp(\pi\sqrt{-1}k/n)$, reality forces the monodromy 
along 
$\sigma(L)$ to be $\exp(-\pi\sqrt{-1}k/n)$; on the other hand $\sigma$ 
also 
changes orientation.)

From the point of view of representations of the fundamental group 
$\pi_1(X)$, one is taking a representation of a central extension of the 
group; real and quaternionic representations correspond to 
representations of a central extension of $\Gamma$: one has the diagram 
of central extensions
\begin{equation}
\begin{matrix}
e & \longrightarrow &{\mathbb Z}&\longrightarrow &
\pi_1(X)' & \longrightarrow &
\pi_1(X)& \longrightarrow & e\\
&&\Big\downarrow  &&\,~\Big\downarrow && \Vert\\
e & \longrightarrow & {\mathbb Z} &\longrightarrow &
\Gamma' & \longrightarrow &
\Gamma & \longrightarrow & e
\end{matrix}
\end{equation}

From the point of view of connections on $S_0$, only one of our two 
points, say $p$, lies in $S_0$.  Our moduli spaces with non--zero degree 
is then given by formulae 
\eqref{rep0even}, \eqref{rep0odd}, \eqref{repI}, \eqref{repII}, but 
with the $=1$ 
replaced by $= \exp(\pi\sqrt{-1}k/n)$.

\section{Connections and vector bundles}

\subsection{Real and quaternionic connections and the Yang--Mills functional}

We have examined, for real and quaternionic bundles, both  their topological classification and their flat or projectively flat structures. Now we want to study the moduli space of their holomorphic structures. One of the best approaches to understanding these
is that pioneered by Narasimhan and Seshadri \cite{NS}, whereby the 
bundles are defined by the $\overline\partial$--operator of a unitary 
connection. The space of these connections has a natural energy 
functional, the Yang--Mills functional, given by the $L^2$--norm of the 
curvature of the connections. The gradient flow of this functional 
preserves the holomorphic structure, and allows us to relate moduli to 
the set of critical points, following the approach of Atiyah and Bott 
\cite{AB}.

Let us first consider the case of degree zero. The theorem of Narasimhan 
and Seshadri \cite{NS}, in this context, says that minima of the 
functional, which are flat connections, correspond to stable or 
polystable bundles; integrating these connections gives representations 
of the fundamental group of the surface into the unitary group. The 
proof of this theorem given by Donaldson in \cite{Do} follows precisely 
the variational viewpoint.

Extending to bundles of non--zero degree $k$, the minima of the 
Yang--Mills functional now correspond to having a curvature tensor that 
is central, and a constant multiple of the K\"ahler form. Integrating 
the connection, however, only gives a representation into $PU_n$; 
instead, one can, following the approach of \cite{AB} and our definition of the central extension given above, modify the 
connection so that it is flat on the complement of a point, with a 
  simple central pole  and residue $2\pi\sqrt{-1} (k/N)$ at the
point; 
this amounts to finding an appropriate scalar $1$--form. Minima then 
correspond to representations of the extension of the fundamental group  
described above.

Now let us put in real structures. We restrict to trivialisations 
invariant under the real structure. The real structure $\sigma$ acts on 
connections by pull--back of forms and conjugation; note that since 
$\sigma$ is anti--holomorphic, the real structure interchanges the 
$(0,1)$ and the $(1,0)$ components. In local coordinates near a fixed 
point: $\sigma^*(A(z) dz+ A'(z) d\overline{z}) = 
\overline{A}'(\sigma(z)) dz + 
\overline{A}(\sigma(z))d\overline z$. The connection is real if it is 
mapped 
to itself under the involution. 

Our involution preserves the Yang--Mills functional, and symmetric 
criticality gives us that a real holomorphic bundle has a constant 
central curvature real connection, which is invariant under the real or quaternionic structure, if and only if it is polystable as a 
complex bundle. (From the Morse flow point of view, if one starts at an invariant connection, one has to end up at one.)   For bundles of non--zero degree, we can then modify the 
connection now by a scalar one--form so that the connection is flat and that at a pair 
of distinct  points $p,\sigma(p)$ there is a pole whose monodromy  is $\exp(\pi\sqrt{-1} 
k/n)$.

\begin{thm} {\rm (Real version of Narasimhan--Seshadri theorem.)}
The moduli space of polystable real (respectively, quaternionic) bundles 
of 
degree $k$ and rank $n$ is diffeomorphic to the space of equivalence 
classes of real (respectively, quaternionic) unitary representations of 
the extended orbifold fundamental group 
$\Gamma'$ in ${\rm U}(n)$ which map the centre to $\exp(\pi\sqrt{-1} 
k/n)$.
\end{thm}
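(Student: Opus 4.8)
The plan is to realize both moduli spaces as fixed loci of a single involution and to invoke the complex Narasimhan--Seshadri correspondence equivariantly. On a real curve $\sigma$ induces an involution $E\mapsto\sigma^*\overline E$ on the moduli space of polystable complex bundles of degree $k$ and rank $n$, and, through holonomy, a matching involution on the equivalence classes of unitary representations of the degree-$k$ central extension $\pi_1(X)'$. By the Propositions of Section 3 a fixed point carries either a real or a quaternionic structure, and these two alternatives are precisely the two types separated by Proposition \ref{lem1} on the representation side; so the real (resp.\ quaternionic) moduli space is the ``real-type'' (resp.\ ``quaternionic-type'') part of the fixed locus on each side. The theorem then follows once one shows that the complex Narasimhan--Seshadri diffeomorphism intertwines the two involutions, and that this identification lifts to the $\Gamma'$-level via the Theorem of Section 5.3.

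First I would check the equivariance at the level of connections, which is the crux. Given a polystable real (resp.\ quaternionic) bundle $(E,\widetilde\sigma)$ of degree $k$, the complex theorem in its Yang--Mills form (Donaldson \cite{Do}, Atiyah--Bott \cite{AB}) supplies a connection of constant central curvature, unique up to unitary gauge, which after the scalar-one-form modification of Section 6.1 is flat on $X\setminus\{p,\sigma(p)\}$ with monodromy $\exp(\pi\sqrt{-1}k/n)$ about each puncture. Because $\sigma$ is anti-holomorphic, pullback together with conjugation by $\widetilde\sigma$ preserves the Yang--Mills functional (the local formula of Section 6.1), so it permutes the minimizers attached to $E$; as the minimizer is unique up to gauge, it is preserved up to gauge, and the transport result (second Proposition of Section 3) absorbs that gauge freedom so that the connection becomes genuinely $\widetilde\sigma$-invariant. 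Equivalently, since the Yang--Mills gradient flow commutes with $\widetilde\sigma$, starting from any $\widetilde\sigma$-invariant connection it converges to a $\widetilde\sigma$-invariant connection of constant central curvature. Integrating this invariant connection and feeding it into the Theorem of Section 5.3 (in its central-extension form) produces a real (resp.\ quaternionic) representation of $\Gamma'$ sending the centre to $\exp(\pi\sqrt{-1}k/n)$.

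For the reverse map, the Theorem of Section 5.3 turns such a representation into a triple $(E,\widetilde\sigma,\nabla)$ with $\nabla$ flat-with-poles of the prescribed monodromy; constancy of its central curvature forces $E$ to be polystable, and Proposition \ref{lem1} ensures the real versus quaternionic dichotomy is respected, so the two maps are mutually inverse once one checks (routinely) that unitary conjugation of representations matches isomorphism of triples. For the diffeomorphism statement I would argue that both constructions are restrictions of the complex Narasimhan--Seshadri diffeomorphism to the respective fixed loci, hence smooth with smooth inverse; concretely, holonomy and its inverse depend smoothly on the data, and both sides appear as quotients of smooth representation varieties by the compact groups occurring in the presentations \eqref{repI} and \eqref{repII}.

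The hard part will be the invariance step of the second paragraph. One must verify that the $\widetilde\sigma$-fixed locus actually meets the minimizing gauge orbit, so that the constrained minimum is the genuine Yang--Mills minimum rather than a higher critical point, and one must track signs carefully, since $\sigma$ reverses orientation and the quaternionic case carries the extra sign $\widetilde\sigma^2=-1$. These signs must combine so that the central constant, and hence the polar monodromy $\exp(\pi\sqrt{-1}k/n)$, comes out identically in the real and quaternionic cases; this bookkeeping, rather than any deep new analysis, is where the real content lies.
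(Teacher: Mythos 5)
Your proposal follows essentially the same route as the paper: invariance of the Yang--Mills functional under the involution (symmetric criticality, equivalently the equivariant gradient flow ending at an invariant minimizer), the scalar one-form modification producing central monodromy $\exp(\pi\sqrt{-1}\,k/n)$ at $p$ and $\sigma(p)$, the holonomy correspondence of Section 5.3 lifted to $\Gamma'$, and the transport Proposition of Section 3 to absorb the residual gauge ambiguity. The only caution is that your opening framing via the fixed locus of $E\mapsto\sigma^*\overline{E}$ and the real/quaternionic dichotomy is valid only for simple bundles (the paper's $\mathcal{O}_X\oplus\mathcal{O}_X$ carries both structures), but since your actual construction works directly with the pairs $(E,\widetilde{\sigma})$, exactly as the paper does, this does not affect the argument.
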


Note that stability as a complex bundle for a real bundle is the same as being  stable as a real bundle. Indeed, 
if $E$ is destabilised by $F$, it is destabilised either by the sheaf
$F+\sigma^*F$ or by the sheaf $F\cap\sigma^*F$.

\subsection{The space of connections, and gauge groups.}

The core idea of Atiyah and Bott in developing their understanding of 
the topology of the space of stable bundles then is to think of it as 
being the space of minima of the Yang--Mills functional on connections 
modulo gauge, and think of this topology of the minima in a Morse 
theoretic way as being obtained, homologically at least, by 
``subtracting" from the topology of all connections the topology of the 
higher order critical points. The total space of connections modulo 
gauge is the quotient of the contractible space ${\ca A}$ of connections 
by the group of gauge transformations, and this group is close to acting 
freely; indeed, generically, the stabilisers consist of constant, 
central gauge transformations. Quotienting the group by these, 
therefore, our space of connections modulo gauge is almost the 
classifying space of the quotient group. Indeed, replacing the space of 
connections by the classifying space of this group allows an inductive 
computation, at least in the equivariant setting. In our case, the space ${\ca A}$ of unitary connections is 
replaced by the space ${\ca A}_\bbr$ of real ($\sigma$--invariant) 
connections; it is again an affine space, and so contractible. We again 
have a group ${\ca G}_\bbr$ of real ($\sigma$--invariant) gauge 
transformations, which, up to the subgroup $Z$ of constant central gauge 
transformations, acts generically freely; the classifying spaces of 
${\ca G}_\bbr$ and of  $\overline{\ca G}_\bbr={\ca G}_\bbr/Z$ are our objects 
of interest.

We first consider the case of real bundles. In our calculations, we will 
suppose that $n$ is greater than $2$, and give the results at the end 
for $n=1,2$.

\noindent{\it Real case, type 0 curves}

For curves $X$ of even genus $g= 2\widehat g$, we consider the cell 
decompositions of $S_0$ given in Section 2, with in particular 
one--cells 
$ \alpha_1,\beta_1,\cdots, \alpha_{\widehat g},\beta_{\widehat g}, \gamma $; on $X$, 
these cells get doubled into pairs $c, \sigma(c)$. Similarly, in the odd 
genus case, one uses the description of the surface $S_0$ given above, 
with cells $ \alpha_1,\beta_1,\cdots, \alpha_{\widehat g},\beta_{\widehat g}, \gamma, 
\delta$; again, on $X$, these cells get doubled into pairs $c, 
\sigma(c)$.

Let us now consider the $\sigma$--equivariant gauge group ${\ca G}_\bbr$ 
on $X$; we begin by considering the based gauge group ${\ca G}_\bbr^0$ 
of gauge transformations which are the identity over the base point, 
taken to lie on the boundary of $S_0$.  As a gauge group on $S_0$, ${\ca 
G}_\bbr$ corresponds to the subgroup of all unitary gauge 
transformations such that their restriction to the boundary satisfies 
$g(x) = \overline{g}(\tau(x))$, where $\tau $ is the antipodal map. For 
the based gauge group, if $x_0$ is the base point, $g(x_0) = 
g(\tau(x_0))= 
Id$. From  the cofibration above (\ref{cofibration}), and the cell 
decompositions we have given (with cells mostly coming in pairs 
$c,\sigma(c)$, so that one just has to specify the gauge transformation 
on $c$),  one has the fibration for the based gauge group:
\begin{equation}\label{fibration}
{\ca G}^0(S^2)\longrightarrow {\ca G}_\bbr^0 \longrightarrow 
\prod_{i=1}^{g+1} 
\Omega(\U_n)\, ,
\end{equation}
where ${\ca G}^0(S^2)= \Omega^2(\U_n)$ is the based gauge group on the two--sphere. For the 
based gauge group, the restriction map to the base point on $X$ gives 
the fibration
\begin{equation}\label{fibrationbase}
{\ca G}_\bbr^0\longrightarrow {\ca G}_\bbr \longrightarrow \ \U_n\, .
\end{equation} 
Finally, we will want to quotient by the (real) centre of $\U_n$:
\begin{equation}\label{fibrationproj}
\bbz/2 \longrightarrow {\ca G}_\bbr\longrightarrow \overline{\ca 
G}_\bbr\, .
\end{equation} 
This then yields the corresponding fibrations on classifying spaces (we note that $B\Omega G = G_0$, the connected component of the identity):

\begin{equation}\label{Bfibration}
\Omega (\U_n)_0\longrightarrow B{\ca G}_\bbr^0 \longrightarrow 
\prod_{i=1}^{g+1}  \U_n,
\end{equation}
\begin{equation}\label{Bfibrationbase}
B{\ca G}_\bbr^0\longrightarrow B{\ca G}_\bbr \longrightarrow \ B\U_n,
\end{equation} 
\begin{equation}\label{Bfibrationproj}
B\bbz/2\longrightarrow B{\ca G}_\bbr\longrightarrow B\overline{\ca 
G}_\bbr\, .
\end{equation} 

We now want the first few homotopy groups of these spaces. We note that 
$\pi_i(BG) = \pi_{i-1}(G)$, for any group $G$.

 For the fundamental groups, the homotopy sequences for the fibrations give:
\begin{equation}
\begin{matrix}
0&\longrightarrow&\pi_1(B{\ca 
G}_\bbr^0)&\longrightarrow&\bbz^{g+1}&\longrightarrow&0\\
\bbz&\longrightarrow&\pi_1(B{\ca G}_\bbr^0)&\longrightarrow&\pi_1(B{\ca 
G}_\bbr)&\longrightarrow&0\\
\bbz/2&\longrightarrow&\pi_1(B{\ca 
G}_\bbr)&\longrightarrow&\pi_1(\overline{B}{\ca 
G}_\bbr)&\longrightarrow&0\end{matrix}\end{equation}
The first sequence tells us that $\pi_1(B{\ca G}_\bbr^0)= \bbz^{g+1}$.
For the middle sequence we want to know what the image of $\bbz$ is.  We work instead with 
$$\pi_1(\U_n) = \bbz\longrightarrow\pi_0({\ca 
G}_\bbr^0)\longrightarrow\pi_0({\ca G}_\bbr)\longrightarrow 0$$
Let $f(t)$ represent a generator of $\pi_1(\U_n)$, with $f(0)=f(1) =1$. Choosing a disk around the base point, with a radial coordinate such that $r=1$ is the boundary of the disk, one can lift $f(t)$ to  ${\ca G}_\bbr$, by 
$F(t,r) = f((1-r)t)$, extending $F$ by the identity outside the disk, 
with the obvious exception that near the conjugate of the base point, 
one has $\overline{F}(t,r)$ . The question is then whether $F(1,r)$ lies 
in the same connected component of ${\ca G}_\bbr^0$ as the identity. To 
fix our ideas, let us look first at two cases, the case $X=S^2$  and 
$n=1$, and the case $X=T^2$, the torus,  and $n=1$. Our real structures 
are the antipodal map for the two--sphere, the fixed point--free 
structure for the torus, and complex conjugation on the circle.

\begin{lem}
The $\sigma$--equivariant based maps $g:S^2\longrightarrow S^1$ 
are classified by $\bbz$; the $\sigma$--equivariant unbased 
maps, by $\bbz/2$.

The $\sigma$--equivariant based maps $g:T^2\longrightarrow S^1$ which are 
homotopically trivial as non--equivariant maps are classified by 
$\bbz$;
the $\sigma$--equivariant, homotopically trivial as non--equivariant maps, unbased 
maps, 
by an element of $\bbz/2$.
\end{lem}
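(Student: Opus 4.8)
The plan is to reduce everything to a single elementary observation. The target is $S^1=\U_1$ with the involution given by complex conjugation, so $\sigma$--equivariance of a map $g$ means $g(\sigma(x))=\overline{g(x)}$. Because $\pi_2(S^1)=0$ and $S^2$ is simply connected (respectively, because on $T^2$ we restrict to maps that are null--homotopic as non--equivariant maps), every $g$ under consideration admits a global lift $g=e^{i\widetilde\theta}$ with $\widetilde\theta\colon X\to\bbr$ continuous. Equivariance then reads $\widetilde\theta(x)+\widetilde\theta(\sigma(x))\in2\pi\bbz$ for all $x$. First I would set $H(x)=\widetilde\theta(x)+\widetilde\theta(\sigma(x))$; this is continuous, $2\pi\bbz$--valued, and $\sigma$--invariant, hence descends to the connected quotient $X/\sigma$ (an $\bbr P^2$ for $S^2$, a Klein bottle for $T^2$) and is therefore a constant $2\pi m$. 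The integer $m$ is the basic invariant.

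For the based case I would fix $x_0$ with $\sigma(x_0)\neq x_0$ and normalise the lift by $\widetilde\theta(x_0)=0$; then $g(\sigma(x_0))=\overline{g(x_0)}=1$ forces $\widetilde\theta(\sigma(x_0))\in2\pi\bbz$, so $m=\tfrac{1}{2\pi}\widetilde\theta(\sigma(x_0))\in\bbz$ is well defined, and homotopy invariance is immediate since a based equivariant homotopy lifts and $m$ then varies continuously in $\bbz$. For completeness, two based maps with the same $m$ have normalised lifts lying in the convex affine set $\{\widetilde\theta:\widetilde\theta(x_0)=0,\ \widetilde\theta(x)+\widetilde\theta(\sigma(x))\equiv 2\pi m\}$, so the straight--line homotopy between them is again an equivariant based homotopy. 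Surjectivity onto $\bbz$ is explicit: choosing a $\sigma$--anti--invariant function $w$ with $w(x_0)=1$ (for $S^2$ take $w(x)=\langle x,x_0\rangle$; in general $w=\tfrac12(f-f\circ\sigma)$ for a suitable bump $f$, using that $\sigma$ is free), the map $g(x)=\exp\!\big(i\pi m(1-w(x))\big)$ is based, equivariant, and has invariant $m$. This yields the $\bbz$ classification in both based cases.

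For the unbased case I would use the restriction fibration ${\ca G}^0_\bbr\to{\ca G}_\bbr\to\U_1=S^1$, $g\mapsto g(x_0)$, whose fibre is the based mapping space just analysed, so $\pi_0({\ca G}^0_\bbr)=\bbz$. Its homotopy sequence ends in
\[
\pi_1(S^1)=\bbz\ \stackrel{\partial}{\longrightarrow}\ \pi_0({\ca G}^0_\bbr)=\bbz\ \longrightarrow\ \pi_0({\ca G}_\bbr)\ \longrightarrow\ 0,
\]
so the whole statement reduces to computing $\partial$, and I expect this to be the main obstacle: one must respect equivariance along the \emph{entire} lift of the loop, not merely at its endpoints, and this is exactly what doubles the jump. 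Concretely I would lift the generating loop $t\mapsto e^{2\pi i t}$ by $g_t(x)=\exp(2\pi i t\,w(x))$ with $w$ as above; since $w(x)+w(\sigma(x))=0$, every $g_t$ is genuinely equivariant, while $g_t(x_0)=e^{2\pi i t}$ runs once around $S^1$. The path starts at the constant map $g_0\equiv 1$ (invariant $0$) and ends at $g_1(x)=e^{2\pi i w(x)}$, whose normalised lift $2\pi(w(x)-1)$ gives invariant $w(\sigma(x_0))-1=-2$. Hence $\partial$ is multiplication by $\pm2$, $\mathrm{im}(\partial)=2\bbz$, and $\pi_0({\ca G}_\bbr)=\bbz/2$.

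The torus case runs identically. The hypothesis of non--equivariant triviality is precisely what guarantees the global lift $g=e^{i\widetilde\theta}$, since a map $T^2\to S^1$ lifts to $\bbr$ iff its class in $[T^2,S^1]=H^1(T^2;\bbz)$ vanishes. Freeness of $\sigma$ again supplies $\sigma$--anti--invariant functions $w$ with $w(x_0)=1$, and the quotient $T^2/\sigma$ is connected, so the constancy of $H$, the convexity argument, and the connecting--map computation all carry over verbatim; the based classes form $\bbz$ and the unbased classes $\bbz/2$.
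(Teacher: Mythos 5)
Your proof is correct, and it extracts exactly the same invariant as the paper's: the integer $m$ measuring the failure of the lift to $\bbr$ to be anti--invariant (the paper writes this as $\hat g(\sigma(p))=-\hat g(p)+m$ on the boundary of a fundamental domain, you as the constant value $2\pi m$ of $\widetilde\theta+\widetilde\theta\circ\sigma$). Where you differ is in the packaging, and your version is tighter at precisely the points where the paper is sketchy. The paper restricts to the half--disk $D^2$ and lifts only the boundary circle, then argues completeness by an ad hoc homotopy of the boundary to the constant $m/2$ followed by an unexplained extension over the disk; you instead lift globally (legitimate since $S^2$ is simply connected, and on $T^2$ the null--homotopy hypothesis is exactly the lifting criterion) and get completeness for free from the convexity of the affine set of normalised lifts with a given $m$, which is a genuinely cleaner argument. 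For the unbased case the paper simply observes that the normalisation of the lift is ambiguous by an integer $k$, shifting $m$ by $2k$, so only the parity survives; your computation of the connecting map $\partial\colon\pi_1(\U_1)\to\pi_0({\ca G}^0_\bbr)$ in the evaluation fibration, via the explicit equivariant loop $g_t=\exp(2\pi i t\,w)$ with $w$ anti--invariant, is the same $\pm2$ in disguise, but it has the advantage of slotting directly into the fibrations \eqref{fibrationbase} et seq.\ that the lemma is actually used for, and of proving (rather than just suggesting) that maps of equal parity are equivariantly homotopic. The explicit surjectivity constructions with $w(x)=\langle x,x_0\rangle$ and $w=\tfrac12(f-f\circ\sigma)$ are also a welcome addition the paper omits.
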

\begin{proof} An equivariant map  $g: S^2\longrightarrow S^1$ is determined 
by 
a map from the two--disk $D^2$ to $S^1$, subject to the constraint that 
the restriction to the boundary circle $\partial D^2$ is equivariant. We 
can lift any map to a map $D^2\longrightarrow \bbr$, where $\bbr$ covers 
$S^1$ in the standard way, mapping the integers to the identity. Now 
lift our map $g$ to $\hat g$, with the base point being mapped to $0\in \bbr$. 
Choosing a point $p$ on the boundary circle of $D^2$, let $a=\hat g(p)$ be the image of $p$. On the lifted map to the 
line,  $\sigma$--equivariance manifests itself by the requirement on 
$\partial D^2$ that the image of $\sigma(p)$ in $\bbr$ be $-a +m$, for 
some integer $m$. This integer will be our invariant: indeed, any 
equivariant homotopy $G$ will have a lift $\hat G$ satisfying $\hat G(t,\sigma(p)) = -\hat G(t,p) 
+m$; there have to be points in the image on both sides of $m/2$, if the 
map is non constant; on the other hand, the boundary can be homotoped to 
the constant map  to $m/2$. The mapping on the rest of the disk can 
then be homotoped to a standard map. If one drops the basing condition, 
we note that the point $a$ is only defined up to an integer $k$; this 
in turn means that $m$ is defined only up to $2k$. Thus one still has 
the parity of $m$ as an invariant, but nothing else.

The maps of the torus follow essentially the same pattern. 
\end{proof}

In the case of interest to us here, the restriction of our map $F$ to 
the   cycles $\alpha_i, \beta_i$ is homotopically trivial: the cycle 
exits, then enters the disk. One can then up to homotopy   contract 
these cycles of $X$ to a point to obtain a map $$\widehat F\,:\, 
S^2\,\longrightarrow \,\U_n$$ or a map  $\widehat F: T^2\longrightarrow 
\U_n$, 
depending 
on the parity of the genus.  One then can take the determinant, to get 
$$\widehat G\,:\, S^2\,\longrightarrow \,\U_1$$ or $\widehat G: 
T^2\longrightarrow \U_1$; the 
fact that our original 
$f$ was a generator of the fundamental group of $\U_n$ tells us that 
the relevant $m$ in the preceding lemma is $\pm2$. In short, the map 
$\bbz\longrightarrow\pi_1(B{\ca G}_\bbr^0)$ is an injection. 

One can ask what its image is.  The isomorphism $\pi_1(B{\ca G}_\bbr^0)= 
\bbz^{g+1}$ is given by restriction to the one--skeleton. As noted, on 
the cycles $\alpha_i, \beta_i$ this restriction is zero (the cycle 
exits, then reenters the disk); in the even genus case, on the cycle 
$\gamma$, it goes to twice the generator (on $X$, the cycle exits the 
disk centred at $p$ to go to the disk centred at $\sigma(p))$; in the 
odd genus case, the restriction is trivial on $\gamma$, and twice a 
generator on $\delta$. This then gives $\pi_1(B{\ca G}_\bbr)= 
\bbz^{g}\oplus\bbz/2$, in both cases.

The final sequence amounts to $\bbz/2\longrightarrow\pi_0({\ca 
G}_\bbr)\longrightarrow\pi_0(\overline{\ca G}_\bbr)\longrightarrow 0$.  
The 
image of $\bbz/2$ depends on whether one can deform $1$ to $-1$ in ${\ca 
G}_\bbr$. When $n$ is even, this is possible: setting $n= 2k$ and choosing a basis so that 
the real structure is given by $\sigma (z_1,z_2,\cdots,z_{2k-1},z_{2k}) 
= (\overline{z}_2,\overline{z}_1, \cdots , \overline{z}_{2k}, 
\overline{z}_{2k-1})$, one can define a real path by ${\rm diag} 
(e^{i\theta},e^{-i\theta},\cdots,e^{i\theta},e^{-i\theta})$. In the odd 
dimensional case, one can use the lifting to $\bbr$ argument in the 
lemma above to show that it is not. This gives $\pi_1(B\overline{\ca 
G}_\bbr)= \bbz^{g}\oplus\bbz/2$, when $n$ is even, and $\bbz^{g}$, when 
$n$ is odd.

For the second homotopy group, one then has
\begin{equation}
\begin{matrix}
\bbz&\longrightarrow&\pi_2(B{\ca G}_\bbr^0)&\longrightarrow&0&\\
0&\longrightarrow&\pi_2(B{\ca G}_\bbr^0)&\longrightarrow&\pi_2(B{\ca 
G}_\bbr)&\longrightarrow&0\\
0&\longrightarrow&\pi_2(B{\ca G}_\bbr)&\longrightarrow&\pi_2( 
B\overline{\ca 
G}_\bbr)&\longrightarrow&\bbz/2&\longrightarrow& 
0\end{matrix}\end{equation}
 One can think of these sequences 
in terms of classifications of bundles on $S^2\times X$; in these terms, 
the top row  map $\bbz\longrightarrow\pi_2(B{\ca G}_\bbr^0)$ in essence 
is 
the restriction of the second Chern class, which is non--zero. Thus 
$\pi_2(B{\ca G}_\bbr^0) = \bbz$, and so, from the second sequence, 
$\pi_2(B{\ca G}_\bbr)=\bbz$. In the last sequence, we have seen that the map to $\bbz/2$ is onto when 
$n$ is even, and zero when $n$ is odd. When $n$ is even, 
one has two classes, one related to the second Chern classes, and the 
other, to a global path from $1$ to $-1$; we saw above that the latter lifted 
to $B\overline{\ca G}_\bbr)$; thus $\pi_2(B\overline{\ca 
G}_\bbr)=\bbz\oplus \bbz/2$ when $n$ is even, $\bbz$ when $n$ is odd. 

\bigskip
\noindent{\it Real case, type I curves}

Let ${\ca G}_\bbr^0$ be the group of gauge transformations which are the 
identity at each base point, with one base point per boundary circle, as 
above.  One notes that, in the one--skeleton, along the $\gamma_i$ the 
gauge transformations must lie in the $\Or_n$ while elsewhere they lie 
in $\U_n$. In particular, note that each $\delta_i$ contributes a 
$\Omega(\U_n)$ to ${\ca G}_\bbr^0$. The corresponding fibrations are 
\begin{equation}\label{fibration3}
\Omega^2(\U_n)\longrightarrow {\ca 
G}_\bbr^0 \longrightarrow \prod_{i=1}^{g} \Omega(\U_n)\times 
\prod_{i=1}^{r} 
\Omega(\Or_n),
\end{equation}
\begin{equation}\label{fibrationbase3} 
{\ca G}_\bbr^0 \longrightarrow {\ca G}_\bbr\longrightarrow 
\prod_{i=1}^{r}\Or_n 
\end{equation}
\begin{equation}\label{fibrationproj3} \bbz/2\longrightarrow 
{\ca G}_\bbr \longrightarrow \overline{\ca G}_\bbr
\end{equation} 
\begin{equation}\label{fibrationbaseproj3} {\ca G}_\bbr^0 
\longrightarrow 
\overline{\ca G}_\bbr\longrightarrow (\prod_{i=1}^{r}\Or_n)/\pm 
\end{equation}
and so:
\begin{equation}\label{Bfibration3} 
\Omega(\U_n)_0\longrightarrow B{\ca G}_\bbr^0 \longrightarrow 
\prod_{i=1}^{g} 
\U_n\times \prod_{i=1}^{r} (\Or_n)_0\, ,
\end{equation} 
\begin{equation}\label{Bfibrationbase3} B{\ca G}_\bbr^0 \longrightarrow 
B{\ca G}_\bbr\longrightarrow \prod_{i=1}^{r}B\Or_n
\end{equation} 
\begin{equation}\label{Bfibrationproj3} B\bbz/2\longrightarrow B{\ca 
G}_\bbr 
\longrightarrow B\overline{\ca G}_\bbr
\end{equation}
For the fundamental 
groups, the homotopy sequences for the fibrations give, for $n>2$:

\begin{equation}
\begin{matrix}
0&\longrightarrow&\pi_1(B{\ca G}_\bbr^0)&\longrightarrow&\bbz^{g}\oplus 
(\bbz/2)^r&\longrightarrow&0\\
(\bbz/2)^r&\longrightarrow&\pi_1(B{\ca 
G}_\bbr^0)&\longrightarrow&\pi_1(B{\ca 
G}_\bbr)&\longrightarrow&(\bbz/2)^r\longrightarrow&0\\
(\bbz/2)&\longrightarrow&\pi_1(B{\ca 
G}_\bbr)&\longrightarrow&\pi_1(\overline{B}{\ca 
G}_\bbr)&\longrightarrow&0\end{matrix}\end{equation}
The first sequence gives $\pi_1(B{\ca G}_\bbr^0)
\,=\, \bbz^{g}\oplus (\bbz/2)^r$. Again, on the second sequence, it is 
easier to shift downwards, as for the type 0 case:
$$(\bbz/2)^r\longrightarrow\pi_0({\ca 
G}_\bbr^0)\longrightarrow\pi_0({\ca 
G}_\bbr)\longrightarrow(\bbz/2)^r\, .$$ 
The map 
$(\bbz/2)^r\longrightarrow\pi_0({\ca G}_\bbr^0)$, as constructed above, 
gives us elements of ${\ca G}_\bbr^0$ localised on disks, through which 
pass the real components of  $X$; their restriction to the real 
components is homotopically trivial  in $\pi_1(\Or_n)$; on the other 
hand, one has an isomorphism
$\pi_2(\U_n, \Or_n)\longrightarrow\pi_1(\Or_n)$, and so one can contract 
each of these elements to the constant map on $X$; in short, the 
map is trivial. Also, the map $\pi_0({\ca 
G}_\bbr)\longrightarrow(\bbz/2)^r$ is surjective; lifting the elements 
$(1,\cdots ,1,-1,1,\cdots,1)$ in $(\bbz/2)^r$ correspond to choosing  maps 
whose restriction to 
$\delta_i, i=2,\dots,r$ is a path from the identity to ${\rm diag}(-1,1,\cdots 
,1)$ in 
$\U_n$, and 
one can choose these so that their square, once restricted, is a generator of the 
fundamental group in $\U_n$, and so lie in $\pi_0({\ca 
G}_\bbr^0)$. On the other hand,  the element 
$(-1,-1,\cdots,-1)$ in $(\bbz/2)^r$ corresponds  to  the constant (over $X$) map to
$diag (-1,1,\cdots,1)$, whose square is the identity.  This tells us that 
$\pi_1(B{\ca G}_\bbr)= \pi_0({\ca G}_\bbr) = \bbz^{g}\oplus 
(\bbz/2)^{r+1}$. Finally, one can also look at the covering
(\ref{fibrationproj3}):
$$ \bbz/2\longrightarrow \pi_0({\ca G}_\bbr) \longrightarrow  
\pi_0(\overline{\ca G}_\bbr)\longrightarrow  0\, .$$
This is, as above, trivial when $n$ is even, injective when $n$ is odd, 
and so $$\pi_1(B\overline{\ca G}_\bbr)= \pi_0(\overline{\ca G}_\bbr) = 
\bbz^{g}\oplus (\bbz/2)^{r+1}$$ for $n$ even, and $\bbz^{g}\oplus (\bbz/2)^{r}$ for $n$  
odd.

For the second homotopy groups, one has
\begin{equation}
\begin{matrix}
\bbz&\longrightarrow&\pi_2(B{\ca G}_\bbr^0)&\longrightarrow&0 \\
0&\longrightarrow&\pi_2(B{\ca G}_\bbr^0)&\longrightarrow&\pi_2(B{\ca 
G}_\bbr)&\longrightarrow&(\bbz/2)^r&\longrightarrow&0\\
0&\longrightarrow&\pi_2(B{\ca G}_\bbr)&\longrightarrow &\pi_2( 
B\overline{\ca 
G}_\bbr)&\longrightarrow&\bbz/2\end{matrix}\end{equation}
In the top sequence, one has, as before, that the map 
$\bbz\longrightarrow\pi_1({\ca G}_\bbr^0)$ is injective; thus  
$\pi_1({\ca 
G}_\bbr^0) = \bbz$. This   can be thought of as being ``localised" on 
the $S^1\times S^2$ in the cofibration (\ref{cofibration3}). The second 
sequence then tells us that there are classes in $\pi_1({\ca G}_\bbr)$ 
represented by an integer, localised on $S^1\times S^2$, and classes in 
$(\bbz/2)^r$, localised on $S^2\times$(one--skeleton); more explicitly, 
one can represent the classes on the one--skeleton by a loop in $\Or_n$ along the real 
curves, which when one moves into $\U_n$, deforms to the identity. If we then take our loop on the real curve, we can extend it into a neighbourhood in such a way that it is the identity on the boundary of the neighbourhood. This tells us that the sequence splits: $\pi_2(B{\ca G}_\bbr) = \bbz\oplus (\bbz/2)^r$.  Finally, one also has:
$$0\longrightarrow \pi_1({\ca G}_\bbr^0) \longrightarrow  
\pi_1(\overline{\ca 
G}_\bbr)\longrightarrow  
\pi_1((\prod_{i=1}^{r}\Or_n)/\pm)\longrightarrow 
0\, ,$$ 
with $\pi_1((\prod_{i=1}^{r}\Or_n)/\pm)$ is $(\bbz/2)^{r+1}$ when 
$n=0(4)$, $(\bbz/2)^{r-1}\oplus\bbz/4$ when $n=2(4)$, and $(\bbz/2)^{r}$ 
when $n$  is odd. Again, the sequence splits, and so 
$\pi_2(B\overline{\ca 
G}_\bbr) = \bbz\oplus (\bbz/2)^{r+1}$ when $n=0(4)$, $\bbz\oplus(\bbz/2)^{r-1}\oplus\bbz/4$ when $n=2(4)$, and $\bbz\oplus(\bbz/2)^{r}$ when $n$  is odd.

For $n=2$, we get   for the fundamental groups $\pi_1(B{\ca G}_\bbr^0) = 
\bbz^g\oplus \bbz^r$,  $\pi_1(B{\ca G}_\bbr) = \pi_1(B\overline{\ca 
G}_\bbr) 
=\bbz^g\oplus \bbz^r\oplus\bbz/2$, and  $\pi_2(B{\ca G}_\bbr^0) = \bbz$,  
$\pi_2(B{\ca G}_\bbr) = \bbz^{r+1}$, and $\pi_2(B\overline{\ca G}_\bbr) 
= \bbz^{r+1}$.

\bigskip
\noindent{\it Real case, type II curves}

Again, we use the cell decompositions given in Section 2. Let 
${\ca G}_\bbr^0$ be the group of gauge transformations which are the identity at each base point; one 
notes that along the $\gamma_i, i= 1,\dots,r$, the gauge 
transformations 
must 
lie in the $\Or_n$ while elsewhere they lie in $\U_n$. Note that each $\delta_i$ contributes a $\Omega(\U_n)$ to ${\ca G}_\bbr^0$.
The corresponding fibrations are
\begin{equation}\label{fibration4}
\Omega^2(\U_n)\longrightarrow {\ca G}_\bbr^0 \longrightarrow 
\prod_{i=1}^{g 
+1} \Omega(\U_n)\times \prod_{i=1}^{r} \Omega(\Or_n)\, ,
\end{equation}
\begin{equation}\label{fibrationbase4}
{\ca G}_\bbr^0\longrightarrow {\ca G}_\bbr \longrightarrow \U_n\times 
\prod_{i=1}^{r}\Or_n.
\end{equation} 
\begin{equation}\label{fibrationproj4}
\bbz/2\longrightarrow {\ca G}_\bbr \longrightarrow \overline{\ca G}_\bbr
\end{equation}  
\begin{equation}\label{fibrationbaseproj4}
{\ca G}_\bbr^0 \longrightarrow  \overline{\ca G}_\bbr\longrightarrow  
(\U_n\times \prod_{i=1}^{r}\Or_n.)/\pm
\end{equation} 
This gives
\begin{equation}\label{Bfibration4}
\Omega (\U_n)_0\longrightarrow B{\ca G}_\bbr^0 \longrightarrow 
\prod_{i=1}^{2\widehat g +r+1} \U_n\times \prod_{i=1}^{r} ( \Or_n)_0\, ,
\end{equation}
\begin{equation}\label{Bfibrationbase4}
B{\ca G}_\bbr^0\longrightarrow B{\ca G}_\bbr \longrightarrow B\U_n\times 
\prod_{i=1}^{r}B\Or_n\, ,
\end{equation}
and 
\begin{equation}\label{Bfibrationproj4}
B\bbz/2\longrightarrow B{\ca G}_\bbr \longrightarrow B\overline{\ca 
G}_\bbr
\end{equation} 

Repeating the arguments for type I and type 0 curves, one obtains the 
results given in the table below.

\noindent {\it Quaternionic case, $n$ odd.}

In this case, as we have seen, the curve can have no fixed points. One can still consider invariant connections, and gauge transformations which commute with the involution $\sigma$. Again, the constant central gauge transformations which commute with $\sigma$ are $\pm1$
 We then can compute, using (\ref{cofibration}), fibrations as in 
(\ref{fibration}) , \eqref{fibrationbase},  \eqref{fibrationproj}. The 
results appear below.

\noindent{\it Quaternionic case, $n$ even, type 0 curve.}

 We get fibrations as in (\ref{fibration}), \eqref{fibrationbase}, 
\eqref{fibrationproj}, and the same results.

\noindent{\it Quaternionic case, $n$ even, type I curve.}

Here one has fibrations as in 
(\ref{fibration3}), 
(\ref{fibrationbase3}), (\ref{fibrationproj3}), 
(\ref{fibrationbaseproj3}),
but with $\Sp_{n/2}$ replacing $\Or_n$. This simplifies things, as 
$\Sp_{n/2}$ is connected and simply connected. Results are given below.

\noindent{\it Quaternionic case, $n$ even, type II curve.}

Here one has fibrations as in 
(\ref{fibration4}),
(\ref{fibrationbase4}), (\ref{fibrationproj4}),
(\ref{fibrationbaseproj4}),
but with $\Sp_{n/2}$ replacing $\Or_n$.  
Again, see below.
\bigskip
\newpage
\bigskip

\bigskip 

{\bf Summary: Homotopy groups of the classifying spaces}
\bigskip

{\footnotesize{
\begin{tabular}{|r||c|c|c||c|c|c|}\hline
&&&&&&\\
&$\pi_1(B{\ca G}_\bbr^0)$ &$\pi_1(B{\ca G}_\bbr)$& $\pi_1(B\overline{\ca 
G}_\bbr)$& $ \pi_2(B{\ca G}_\bbr^0)$ & $\pi_2(B{\ca G}_\bbr)$ 
&$\pi_2(B\overline{\ca G}_\bbr)$\\\hline\hline
Real,&$\bbz^{g+1}$&$\bbz^{g}\oplus\bbz/2$&$\bbz^{g}\oplus\bbz/2$,&$\bbz (n> 1)$&$\bbz(n>1)$&$\bbz\oplus \bbz/2$\\
 type 0 &&& ($n$ even)& && ($n$ even)\\
&&&$\bbz^{g}$($n$ odd)& &&$\bbz$ ($n$ odd, $>1$)\\
&&&&$0\ (n=1)$&$0\ (n=1)$&$0\ (n=1)$\\
\hline
Real,&$\bbz^{g}\oplus (\bbz/2)^r$&$\bbz^{g}\oplus (\bbz/2)^{r+1}$&$\bbz^{g}\oplus (\bbz/2)^{r+1}$&$\bbz$&$\bbz\oplus (\bbz/2)^r$&$\bbz\oplus (\bbz/2)^{r+1}$\\
type I&$(n>2)$&$(n>2)$& ($n>2$, even)&$(n>1)$&$(n>2)$& ($n =0(4))$\\
&&&$\bbz^{g}\oplus (\bbz/2)^{r}$& &&$\bbz\oplus(\bbz/2)^{r-1}\oplus\bbz/4$\\
&&& ($n>1$ odd)&&& ($n =2(4) ,n>2)$\\
&&&&&&$\bbz\oplus(\bbz/2)^{r}$\\
&&&&&&($n>1$ odd)\\
&$\bbz^{g+r}  $&$\bbz^{g+r}\oplus \bbz/2  $&$\bbz^{g+r}\oplus \bbz/2  $&&$\bbz^{r+1}  $&$\bbz^{r+1}  $\\
&$ (n=2)$&$ (n=2)$&$  (n=2)$&&$  (n=2)$&$ (n=2)$\\
&$\bbz^g (n=1)$&$\bbz^g\oplus\bbz/2 (n=1)$& $\bbz^g (n=1)$& $0\ (n=1)$&$0\ (n=1)$&$0\ (n=1)$\\
\hline
Real,&$\bbz^{g+1}\oplus (\bbz/2)^r$&$\bbz^{g}\oplus (\bbz/2)^{r+1}$&$\bbz^{g}\oplus (\bbz/2)^{r+1}$&$\bbz$&$\bbz\oplus (\bbz/2)^r$&$\bbz\oplus (\bbz/2)^{r+1}$\\
type II&$(n>2)$&$(n>2)$& ($n>2$, even)&$(n>1)$&$(n>2)$& ($n =0(4))$\\
&&&$\bbz^{g}\oplus (\bbz/2)^{r}$& &&$\bbz\oplus(\bbz/2)^{r-1}\oplus\bbz/4$\\
&&& ($n>1$ odd)&&& ($n =2(4) ,n>2)$\\
&&&&&&$\bbz\oplus(\bbz/2)^{r}$\\
&&&&&&($n>1$ odd)\\
&$\bbz^{g+1+r}  $&$\bbz^{g+r}\oplus \bbz/2  $&$\bbz^{g+r}\oplus \bbz/2  $&&$\bbz^{r+1}  $&$\bbz^{r+1}  $\\
&$ (n=2)$&$ (n=2)$&$  (n=2)$&&$  (n=2)$&$ (n=2)$\\
&$\bbz^{g+1} (n=1)$&$\bbz^g\oplus\bbz/2 (n=1)$& $\bbz^g (n=1)$& $0\ (n=1)$&$0\ (n=1)$&$0\ (n=1)$\\
\hline
Quat.,&$\bbz^{g+1}$&$\bbz^{g}\oplus\bbz/2$&$\bbz^{g}\oplus\bbz/2$,&$\bbz$&$\bbz$&$\bbz\oplus \bbz/2$\\
 type 0 &&& ($n$ even)&$n>1$&$n>1$& ($n$ even)\\
&&&$\bbz^{g}$($n$ odd)& &&$\bbz$ ($n>1$ odd)\\
&&&&$0\ (n=1)$&$0\ (n=1)$&$0\ (n=1)$\\
\hline
Quat.,&$\bbz^{g }$&$\bbz^{g}$&$\bbz^{g}$,&$\bbz$&$\bbz$&$\bbz\oplus \bbz/2$\\
 type I &&& &&&\\
(n even) &&& &&&\\
\hline
 Quat.,&$\bbz^{g+1 }$&$\bbz^{g}\oplus \bbz/2$&$\bbz^{g}\oplus \bbz/2$,&$\bbz$&$\bbz$&$\bbz\oplus \bbz/2$\\
 type II &&& &&&\\
(n even) &&& &&&\\
\hline \end{tabular} }}

\bigskip
\vfill\eject
Using the determinant map $\U_n\longrightarrow \U_1$ and the various 
fibrations in mapping spaces that are deduced from it, we can compute the homotopy groups in the $\SU_n$ case, or, more generally, for fixed determinant:
\bigskip

{\bf Homotopy groups of the classifying spaces, fixed determinant $(n>1)$}

\bigskip

{\footnotesize{
\begin{tabular}{|r||c|c|c||c|c|c|}\hline
&&&&&&\\
&$\pi_1(B{\ca SG}_\bbr^0)$ &$\pi_1(B{\ca SG}_\bbr)$& 
$\pi_1(B\overline{\ca 
SG}_\bbr)$& $ \pi_2(B{\ca SG}_\bbr^0)$ & $\pi_2(B{\ca SG}_\bbr)$ 
&$\pi_2(B\overline{\ca SG}_\bbr)$\\\hline\hline
Real,&$0$&$0$&$0$,&$\bbz $&$\bbz$&$\bbz\oplus \bbz/2$\\
 type 0 &&&  & && ($n$ even)\\
&&& & &&$\bbz$ ($n$ odd, $>1$)\\
\hline
Real,&$ (\bbz/2)^r$&$  (\bbz/2)^{r}$&$(\bbz/2)^{r}$&$\bbz$&$\bbz\oplus (\bbz/2)^r$&$\bbz\oplus (\bbz/2)^{r+1}$\\
type I&$(n>2)$&$(n>2)$&  & &$(n>2)$& ($n =0(4))$\\
&&&& &&$\bbz\oplus(\bbz/2)^{r-1}\oplus\bbz/4$\\
&&&  &&& ($n =2(4), n>2)$\\
&&&&&&$\bbz\oplus(\bbz/2)^{r}$\\
&&&&&&($n$ odd)\\
&$\bbz^{r} (n=2)$&$\bbz^{r} (n=2)$&$\bbz^{r}(n=2)$& &$\bbz^{r+1} (n=2)$&$\bbz^{r+1} (n=2)$\\

\hline
Real&$ (\bbz/2)^r$&$  (\bbz/2)^{r}$&$(\bbz/2)^{r}$& $\bbz$&$\bbz\oplus (\bbz/2)^r$&$\bbz\oplus (\bbz/2)^{r+1}$\\
type II&$(n>2)$&$(n>2)$&  & &$(n>2)$& ($n =0(4))$\\
&&&& &&$\bbz\oplus(\bbz/2)^{r-1}\oplus\bbz/4$\\
&&&  &&& ($n =2(4),n>2)$\\
&&&&&&$\bbz\oplus(\bbz/2)^{r}$\\
&&&&&&($n$ odd)\\
&$\bbz^{r} (n=2)$&$\bbz^{r} (n=2)$&$\bbz^{r}(n=2)$ &&$\bbz^{r+1} (n=2)$&$\bbz^{r+1} (n=2)$\\
\hline
Quat.,&$0$&$0$&$0$&$\bbz$&$\bbz$&$\bbz\oplus \bbz/2$\\
 type 0 &&& &&& ($n$ even)\\
&&& & &&$\bbz$ ($n$ odd)\\
\hline
Quat.,&$0$&$0$&$0$&$\bbz$&$\bbz$&$\bbz\oplus \bbz/2$\\
 type I &&& &&&\\
(n even) &&& &&&\\
\hline
 Quat.,&$0$&$0$&$0$&$\bbz$&$\bbz$&$\bbz\oplus \bbz/2$\\
 type II &&& &&&\\
(n even) &&& &&&\\
\hline \end{tabular}}}

\subsection{Critical points of the Yang--Mills functional}

We noted above  that the Yang--Mills functional is invariant under the real involution; 
this then tells us that the gradient flows, which take us to critical points in the 
full space of connections, are preserved by the real structure: the real gradient flow takes us to a critical point in the full space. Conversely, if there is a direction $v$ in which the derivative of the Yang--Mills functional is negative, then it is negative in $\sigma_*(v)$ and so in $v+\sigma_*(v)$. Critical points for the real flows are then real critical points for the full space, and so, applying the results of Atiyah and Bott (\cite{AB}), we have, as noted above,

\begin{prop}
The critical points of the Yang--Mills functional on the space of real (resp. quaternionic) connections on 
our surface correspond to sums of real  (resp. quaternionic)  bundles, each equipped with a connection with constant central curvature.
\end{prop}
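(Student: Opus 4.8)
The plan is to combine the principle of symmetric criticality, already invoked in the paragraph preceding the statement, with the Atiyah--Bott description of the Yang--Mills critical points in the full (complex) setting, and then to check that the resulting canonical decomposition is compatible with the real or quaternionic structure. First I would record the relation between the two critical sets. A real connection $A\in\ca A_\bbr$ that is critical for the restriction of the functional to $\ca A_\bbr$ is automatically critical in the full space $\ca A$: if $v$ were a descent direction at $A$, then so would be $\sigma_*(v)$, and hence the real direction $v+\sigma_*(v)$, contradicting criticality among real connections; conversely a real critical connection is a fortiori critical within $\ca A_\bbr$. By the results of Atiyah and Bott (\cite{AB}), criticality in $\ca A$ on a curve is equivalent to $d_A(*F_A)=0$, which forces the skew--Hermitian endomorphism $\Phi:=*F_A$ to be $\nabla$--parallel. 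A parallel endomorphism has constant eigenvalues $-\sqrt{-1}\,\mu_\lambda$, and its eigenspaces furnish an orthogonal, parallel splitting
\begin{equation}
E\,=\,\bigoplus_\lambda E_\lambda
\end{equation}
into sub-bundles of distinct slopes $\mu_\lambda$, each carrying the restricted connection, of constant central curvature $F_A|_{E_\lambda}=-\sqrt{-1}\,\mu_\lambda\,\id\,\omega$ with $\omega$ the K\"ahler form.

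The heart of the argument is to see that the lift $\widetilde\sigma$ respects this splitting. Because $A$ is real we have $\widetilde\sigma^*A=A$, hence $\widetilde\sigma^*F_A=F_A$; since $\sigma$ reverses orientation the Hodge star anticommutes with $\sigma^*$, so $\widetilde\sigma^*\Phi=-\Phi$. Chasing eigenvectors, for $v\in(E_\lambda)_x$ one finds, using that $\widetilde\sigma$ is antilinear, that $\Phi_{\sigma(x)}(\widetilde\sigma v)=-\sqrt{-1}\,\mu_\lambda\,\widetilde\sigma v$: the sign produced by orientation reversal cancels the conjugation of the eigenvalue, so $\widetilde\sigma(E_\lambda)=E_\lambda$. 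Thus each summand is preserved by $\widetilde\sigma$, and the restriction $\widetilde\sigma|_{E_\lambda}$ is again an antilinear lift of $\sigma$ with $(\widetilde\sigma|_{E_\lambda})^2=\pm\id$ carrying the same sign as the global structure. Hence each $E_\lambda$ is a real (resp. quaternionic) bundle equipped with a constant central curvature connection, which is exactly the asserted decomposition; the converse direction is immediate, a direct sum of such bundles being a real (resp. quaternionic) critical connection.

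The step I expect to be the main obstacle is precisely the sign bookkeeping above: one must be sure that $\widetilde\sigma$ fixes each eigenspace rather than permuting the pieces $E_\lambda\leftrightarrow E_{-\lambda}$, since only then does each summand inherit a structure of the same type. Were some slope-$\mu_\lambda$ piece not $\widetilde\sigma$--stable, one would be forced into the grouped form $F\oplus\sigma^*\overline{F}$ as in the earlier proposition, and the induced structure on the pair would have to be analysed separately; the orientation-versus-antilinearity computation is what rules this out, and it is consistent with the fact that a single constant-central-curvature real bundle exists, $\sigma^*\overline{E}$ having the same degree as $E$. In the quaternionic case one should further note that each $E_\lambda$, carrying an antilinear square root of $-\id$, is automatically of even rank, in accordance with the topological constraints obtained earlier. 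The underlying analytic input---existence and convergence of the gradient flow, and the identification of critical points with the equation $d_A(*F_A)=0$---is imported wholesale from \cite{AB} and is not reproven here.
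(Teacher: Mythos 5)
Your argument follows the paper's proof in essentially the same way: symmetric criticality (averaging a descent direction $v$ to $v+\sigma_*(v)$) identifies real critical points with $\sigma$-invariant critical points of the full functional, and then the Atiyah--Bott parallel eigenspace decomposition of $*F_A$ gives the splitting into constant central curvature summands. The paper simply asserts that ``the real structure must respect this decomposition''; your sign bookkeeping --- the orientation reversal of $\sigma$ acting on $*$ cancelling against the antilinearity of $\widetilde\sigma$ conjugating the eigenvalue $-\sqrt{-1}\,\mu_\lambda$ --- correctly supplies the verification of that remark and rules out the pieces being permuted.
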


The discussion above, and the results of Atiyah and Bott, give us a real bundle which decomposes a sum of bundles with constant central curvature; the only thing left to remark is that the real structure must respect this decomposition.

We next must compute the indices of the critical points, adapting \cite{AB}. In the full space, (see \cite{AB})critical points correspond to a sum of bundles  $\oplus_iV_i$ with possibly different slopes  (degree/rank); the index is given by the real dimension of the space $H^1(\Sigma,\oplus_{\mu(V_i)>\mu(V_j)}V_i^*\otimes V_j)$, where $\mu$ is the slope. At a real critical point, the real index will then be of dimension one half of this, i.e., its complex dimension. 

\begin{prop}
For non--minimal critical points in degree $k$, rank $N$, the index is bounded bounded 
below by $1+ (n-1) (g-1)$.

Thus, the inclusion of the space of minima of the Yang Mills functional (polystable  real bundles) into the space of connections (modulo gauge) induces isomorphisms in homology and homotopy groups in dimensions less than $(n-1) (g-1)$, and surjections in dimension $(n-1) (g-1)$.
\end{prop}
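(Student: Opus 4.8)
The plan is to combine the Atiyah--Bott index formula, halved by passage to the real locus as explained in the paragraph preceding the statement, with an elementary numerical estimate, and then to feed the resulting codimension bound into the standard stratification argument for the Yang--Mills flow.

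First I would recall the shape of a non-minimal critical point. By the previous proposition such a point is a direct sum $E=\bigoplus_{i=1}^{m}E_i$ of semistable bundles of pairwise distinct slopes, ordered so that $\mu_1>\mu_2>\cdots>\mu_m$; non-minimality means $m\geq 2$. Write $n_i=\mathrm{rank}\,E_i$ and $d_i=\deg E_i$, so $\sum_i n_i=n$. As recorded above, the real index equals the complex dimension of the negative normal space,
\[
\lambda_\bbr=\sum_{i<j}\dim_\bbc H^1(X,\,E_i^*\otimes E_j).
\]
Since $E_i,E_j$ are semistable with $\mu_i>\mu_j$ for $i<j$, one has $H^0(X,E_i^*\otimes E_j)=\Hom(E_i,E_j)=0$, so Riemann--Roch gives $\dim_\bbc H^1(X,E_i^*\otimes E_j)=n_i n_j(g-1)+(n_j d_i-n_i d_j)$.

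The core of the argument is then to bound the two resulting sums separately; I assume $g\geq 1$ throughout, the cases $g\leq 1$ being either vacuous or immediate. For the slope term, each summand $n_j d_i-n_i d_j=n_i n_j(\mu_i-\mu_j)$ is a strictly positive integer, so $\sum_{i<j}(n_j d_i-n_i d_j)\geq 1$. For the genus term, writing $P:=\sum_{i<j}n_i n_j=\tfrac12\big(n^2-\sum_i n_i^2\big)$, I would maximize $\sum_i n_i^2$ subject to $\sum_i n_i=n$, $n_i\geq 1$, $m\geq 2$; the maximum $(n-1)^2+1$ is attained at the partition $(n-1,1)$, whence $P\geq n-1$. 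Combining these, and using $g-1\geq 0$,
\[
\lambda_\bbr=P(g-1)+\sum_{i<j}(n_j d_i-n_i d_j)\ \geq\ (n-1)(g-1)+1 .
\]

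Finally I would invoke the stratification of $\ca A_\bbr$ modulo gauge by the gradient flow of the real Yang--Mills functional: the total space is built from the stratum of minima by successively attaching the unstable strata of the higher critical sets, each of real codimension equal to its index $\lambda_\bbr$. Since every non-minimal stratum has codimension at least $1+(n-1)(g-1)$, the inclusion of the minimum stratum induces isomorphisms on $H_*$ and $\pi_*$ in degrees $<(n-1)(g-1)$ and a surjection in degree $(n-1)(g-1)$, which is exactly the assertion. The main obstacle is not the numerics but the justification of this last step in the equivariant setting: one must verify that restricting the Atiyah--Bott stratification to $\ca A_\bbr$ produces strata whose real codimension is precisely the half-index computed above, and that the negative normal directions are transverse to the minimum stratum. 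The torsion and non-orientability phenomena flagged in the introduction would obstruct a full Morse-theoretic computation of the cohomology, but only the codimension estimate enters the connectivity statement, so those difficulties do not intervene here.
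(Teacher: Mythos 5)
Your proof is correct and follows essentially the same route as the paper: a Riemann--Roch computation of the (half) index combined with the destabilization inequality to get the $+1$ and the bound $\sum_{i<j}n_in_j\geq n-1$ (the paper's $m(n-m)\geq n-1$) to get the $(n-1)(g-1)$, followed by the Atiyah--Bott stratification argument for the connectivity statement. The only cosmetic difference is that you work with the full graded decomposition and the exact value of $h^1$ (via vanishing of $H^0$), whereas the paper bounds $h^1(X,\mathrm{Hom}(F,E/F))\geq -\chi$ for a single destabilizing subbundle $F$; the numerics are identical.
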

\begin{proof} Let us suppose that a real bundle $E$ of degree $k$, rank $n$ is destabilised by a bundle $F$ of degree $\ell$, rank $m$;
we have $\ell n > k m$. The index is given by $h^1(X, Hom(F,E/F))\geq m(n-m) (g-1) - 
c_1(Hom(F,E/F)) =  m(n-m) (g-1) - (-\ell(n-m) + (k-\ell)m $, by Riemann--Roch. By the stability condition, this is bounded below by $m(n-m)(g-1) + 1$, which in turn is bounded below by
$(n-1)(g-1) +1$. 
\end{proof}

Furthermore, looking at the various degrees $k$, case by case, we can improve the bound given in the proposition:
\begin{itemize}
\item{}  $(n,g)= (2,2)$: For $k=0$, the index is bounded below by 3, instead of 2.
\item{}  $(n,g)= (3,2)$: For $k=0$, the index is bounded below by 5, instead of 3.
\item{}  $(n,g)= (2,3)$: For $k=0$, the index is bounded below by 4, instead of 3.
\end{itemize}

The same estimates apply for the case of fixed determinant. 

The quotient space of real connections modulo gauge, being the quotient of an affine space, is connected. Thus, applying the information on the non-minimal critical points of the Yang-Mills functional informs us on the connectedness of the space of minima, that is the moduli space of polystable bundles:

\begin{thm}
For $g\geq 2, n\geq 2$, there are connected moduli spaces of real or quaternionic polystable
holomorphic bundles for each allowed topological type. The topological type is determined by the first Chern class, as well as Stiefel--Whitney classes of the restriction of the real bundles to $X(\bbr)$.
\end{thm}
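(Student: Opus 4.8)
The plan is to deduce the theorem formally from three facts already assembled above: the space of real (resp.\ quaternionic) connections modulo gauge is connected; its Yang--Mills minima are exactly the polystable bundles of a prescribed topological type; and the non-minimal critical points have large index. First I would fix an allowed topological type. By the classification results of Section~4 (the propositions on real bundles with and without real points, and on quaternionic bundles), every such type is realised by a smooth real (resp.\ quaternionic) bundle, and two such bundles are smoothly equivariantly isomorphic precisely when their first Chern class and their Stiefel--Whitney classes $w_1(E_{i,\bbr})$ on the components of $X(\bbr)$ agree; this is exactly the final assertion of the statement. On a fixed smooth bundle of this type the space ${\ca A}_\bbr$ of real (resp.\ quaternionic) unitary connections is an affine space, hence contractible, so in particular ${\ca A}_\bbr/{\ca G}_\bbr$ is connected.

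Next I would bring in the Morse-theoretic picture of Atiyah--Bott in its real form. The Yang--Mills functional is ${\ca G}_\bbr$--invariant, and by the real Narasimhan--Seshadri theorem together with the proposition identifying critical points, its minima are precisely the polystable real (resp.\ quaternionic) bundles of the given topological type; thus the moduli space we want is exactly the minimum set sitting inside the connected space ${\ca A}_\bbr/{\ca G}_\bbr$. By the index estimate established in the preceding proposition, every non-minimal critical point has real index at least $1+(n-1)(g-1)$, and for $g\ge 2$, $n\ge 2$ this is at least $2$. Passing from the minimum sublevel set to the whole space therefore attaches only cells of dimension $\ge 2$, so the inclusion of the minima induces an isomorphism on $\pi_0$; since the ambient space is connected, the moduli space is connected. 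Non-emptiness for each allowed type is guaranteed by the explicit representation-space descriptions of Section~5 (equivalently, the semistable stratum is non-empty), so the minimum is attained.

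The one point demanding genuine care --- and the main obstacle --- is justifying the infinite-dimensional \emph{equivariant} Morse theory on ${\ca A}_\bbr$: one must know that the negative gradient flow of Yang--Mills preserves the real locus (the symmetric-criticality remark above), that the real critical set stratifies with its real codimensions equal to the \emph{complex} codimensions of the Atiyah--Bott strata, so that the half-dimension bookkeeping behind the index bound is legitimate, and that the flow is well enough behaved for the handle-attachment description to hold. The first two are supplied by the reduction to \cite{AB} through the involution and by the Riemann--Roch computation in the preceding proposition; the flow behaviour is inherited from the non-equivariant case of \cite{Do}. Once these are in place the connectivity follows purely formally. Since the argument is uniform across the three types of real curve and carries over verbatim to the quaternionic case (replacing $\Or_n$ by $\Sp_{n/2}$ throughout), it yields a connected moduli space for \emph{every} allowed topological type, which is the assertion of the theorem.
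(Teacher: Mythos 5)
Your argument is essentially the paper's own: the paper deduces connectedness from the fact that the space of real connections modulo gauge is a quotient of an affine space, combined with the index bound $1+(n-1)(g-1)\ge 2$ on non-minimal Yang--Mills critical points, and reads off the topological classification from the propositions of Section~4. Your write-up is more explicit about the equivariant Morse-theoretic justification and about non-emptiness, but the route is the same.
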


We next note that we have computed the first two homotopy groups of a space very close to the space 
of connections, that is the spaces $B\overline{\ca 
G}_\bbr$. Indeed, if ${\ca A}$ is the affine space of real connections, one has a map $B\overline{\ca 
G}_\bbr\simeq E\overline{\ca G}_\bbr\times {\ca A}/\overline{\ca G}_\bbr\rightarrow {\ca A}/\overline{\ca G}_\bbr$, which is a fibration with trivial fibres over the set of generic connections over which $\overline{\ca 
G}_\bbr$ acts freely. One can pull back the Yang-Mills functional to $B\overline{\ca 
G}_\bbr$, and consider it there; the indices stay the same. If one is in the case when degree and rank are coprime (the ``coprime  case"), let us restrict to the sets $B\overline{\ca 
G}_\bbr)_{\leq c}$, $({\ca A}/\overline{\ca G}_\bbr)_{\leq c} $ where the Yang-Mills functional takes values less than $c$, if $c$ is below the  level of the first non-minimal critical point, there are no reducible connections, and then 
\begin{equation}
B\overline{\ca 
G}_\bbr)_{\\leq c}\simeq({\ca A}/\overline{\ca G}_\bbr)_{\leq c} 
\end{equation}
One can then use the results on the homotopy groups that we have computed and transfer them to $({\ca A}/\overline{\ca G}_\bbr)_{\leq c}$, and hence, via the Morse flows,  to the moduli of stable bundles.

\begin{thm}
In the coprime case, for $(n-1)(g-1)>2$, or for $k=0$, $(n,g) = (3,2), (2,3)$, the fundamental groups  and second homotopy groups of the moduli spaces ${\ca M}_\bbr$ of real or quaternionic bundles and ${\ca M}^0_\bbr$ of real or quaternionic bundles of fixed determinant are as follows:

{\footnotesize{
\begin{tabular}{|r||c|c||c|c|}\hline
&&&&\\
&$\pi_1({\ca M}_\bbr )$ &$\pi_2({\ca M}_\bbr )$ & $\pi_1({\ca M}^0_\bbr)$& $ \pi_2({\ca M}^0_\bbr)$ \\ \hline\hline
Real,&$\bbz^{g}\oplus\bbz/2$&$\bbz\oplus \bbz/2$ &$0$,&$\bbz\oplus \bbz/2$\\
 type 0  & ($n$ even)&   ($n$ even) && ($n$ even)\\
 &$\bbz^{g}$($n$ odd)&  $\bbz$ ($n$ odd, $>1$)&&$\bbz$ ($n$ odd, $>1$)\\
\hline
Real,&$\bbz^{g}\oplus (\bbz/2)^{r+1}$&$\bbz\oplus (\bbz/2)^{r+1}$&$(\bbz/2)^{r}$&$\bbz\oplus (\bbz/2)^{r+1}$\\
type I & ($n$ even)& ($n =0(4))$& & ($n =0(4))$\\
& $\bbz^{g}\oplus (\bbz/2)^{r}$ &$\bbz\oplus(\bbz/2)^{r-1}\oplus\bbz/4$&&$\bbz\oplus(\bbz/2)^{r-1}\oplus\bbz/4$\\
&  ($n$ odd)&  ($n =2(4),n>2)$&& ($n =2(4))$\\
&&$\bbz\oplus(\bbz/2)^{r}$&&$\bbz\oplus(\bbz/2)^{r}$\\
&&($n$ odd)&&($n$ odd)\\
&$\bbz^{g+r}\oplus \bbz/2 (n=2) $&$\bbz^{r+1} (n=2) $&$\bbz^{r} (n=2)$&$\bbz^{r+1} (n=2)$\\
\hline
Real,&$\bbz^{g}\oplus (\bbz/2)^{r+1}$&$\bbz\oplus (\bbz/2)^{r+1}$&$(\bbz/2)^{r}$&$\bbz\oplus (\bbz/2)^{r+1}$\\
type II&  ($n$ even)&  ($n =0(4))$&& ($n =0(4))$\\
& $\bbz^{g}\oplus (\bbz/2)^{r}$&  $\bbz\oplus(\bbz/2)^{r-1}\oplus\bbz/4$&&$\bbz\oplus(\bbz/2)^{r-1}\oplus\bbz/4$\\
&  ($n$ odd)&  ($n =2(4)n>2)$&& ($n =2(4)n>2)$\\
&&$\bbz\oplus(\bbz/2)^{r}$&&$\bbz\oplus(\bbz/2)^{r}$\\
&&($n$ odd)&&($n$ odd)\\
&$\bbz^{g+r}\oplus \bbz/2 (n=2) $&$\bbz^{r+1} (n=2) $&$\bbz^{r} (n=2)$&$\bbz^{r+1} (n=2)$\\
\hline
Quat., &$\bbz^{g}\oplus\bbz/2$,&$\bbz\oplus \bbz/2$&$0$&$\bbz\oplus \bbz/2$\\
 type 0 &  ($n$ even)& ($n$ even) && ($n$ even)\\
& $\bbz^{g}$($n$ odd)&  $\bbz$ ($n$ odd) &&$\bbz$ ($n$ odd)\\
\hline
Quat.,&$\bbz^{g}$,&$\bbz\oplus \bbz/2$&$0$&$\bbz\oplus \bbz/2$\\
 type I &&&&\\
(n even) &&&&\\
\hline
 Quat.,&$\bbz^{g}\oplus \bbz/2$&$\bbz\oplus \bbz/2$&$0$&$\bbz\oplus \bbz/2$\\
 type II &&&&\\
(n even) &&&&\\
\hline \end{tabular} }}
\bigskip

For $(n,g)= (3,2), (2,3)$ the fundamental groups are as given in the table above, and the second homotopy groups are quotients of the groups given above. For  $(n,g)= (2,2)$, the fundamental groups are quotients of the groups given in the table above.
\end{thm}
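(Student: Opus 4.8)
The plan is to read both homotopy groups off the classifying--space computations already tabulated, once it is shown that in the stated range the moduli space agrees with $B\overline{\ca G}_\bbr$ through dimension $2$. The first step is to pin down a homotopy equivalence $\ca M_\bbr\simeq (B\overline{\ca G}_\bbr)_{\leq c}$. In the coprime case every polystable real (resp.\ quaternionic) bundle is in fact stable, so on the minimum locus of the Yang--Mills functional the quotient group $\overline{\ca G}_\bbr$ acts freely; the downward gradient flow then retracts the sublevel set $({\ca A}/\overline{\ca G}_\bbr)_{\leq c}$ onto that locus, i.e.\ onto $\ca M_\bbr$, provided $c$ lies below the first non--minimal critical value. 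Combined with the equivalence $(B\overline{\ca G}_\bbr)_{\leq c}\simeq ({\ca A}/\overline{\ca G}_\bbr)_{\leq c}$ noted above --- which holds precisely because below $c$ there are no reducible connections, so the relevant fibres are contractible --- this gives $\ca M_\bbr\simeq (B\overline{\ca G}_\bbr)_{\leq c}$, and likewise $\ca M^0_\bbr\simeq (B\overline{\ca SG}_\bbr)_{\leq c}$ in the fixed--determinant case.

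Next I would exploit the Yang--Mills stratification of $B\overline{\ca G}_\bbr$, pulled back as above. Passing from the sublevel set to the whole space amounts to attaching cells whose dimensions are the indices of the higher critical sets, each of which, by the index proposition, is at least $1+(n-1)(g-1)$. Consequently the inclusion $\ca M_\bbr\hookrightarrow B\overline{\ca G}_\bbr$ is an isomorphism on $\pi_i$ for $i\leq (n-1)(g-1)-1$ and a surjection for $i=(n-1)(g-1)$. When $(n-1)(g-1)>2$ this forces $\pi_1(\ca M_\bbr)\cong\pi_1(B\overline{\ca G}_\bbr)$ and $\pi_2(\ca M_\bbr)\cong\pi_2(B\overline{\ca G}_\bbr)$; reading these, and for $\ca M^0_\bbr$ the corresponding $B\overline{\ca SG}_\bbr$ entries, off the two summary tables reproduces every line of the theorem, the quaternionic rows being the ones computed with $\Sp_{n/2}$ in place of $\Or_n$.

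Finally I would handle the borderline $(n,g)$. For $(3,2)$ and $(2,3)$ one has $(n-1)(g-1)=2$, so the comparison is an isomorphism on $\pi_1$ but only a surjection on $\pi_2$; at $k=0$ the sharpened index bounds ($\geq 5$ and $\geq 4$ respectively) raise the connectivity enough to make the listed values exact, whereas in the remaining coprime degrees $\pi_2(\ca M_\bbr)$ is determined only up to this surjection, whence the ``quotient'' proviso. The case $(2,2)$, with $(n-1)(g-1)=1$, is controlled only at the level of $\pi_1$, again up to a quotient. I expect the main obstacle to be exactly this end of the argument: confirming that the degree--by--degree index estimates are sharp enough to convert the generic surjection into the claimed isomorphism in the small cases, and correctly recording which groups survive only as quotients when the minimal index merely meets, rather than exceeds, the relevant dimension --- all of this resting on the coprimality hypothesis, which is what guarantees the free action and hence the clean retraction onto $\ca M_\bbr$ in the first place.
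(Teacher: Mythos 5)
Your proposal reconstructs exactly the argument the paper uses: the coprimality hypothesis identifies ${\ca M}_\bbr$ (resp.\ ${\ca M}^0_\bbr$) with a low sublevel set of $B\overline{\ca G}_\bbr$ (resp.\ $B\overline{\ca SG}_\bbr$), the index bound $1+(n-1)(g-1)$ from the critical-point proposition transfers $\pi_1$ and $\pi_2$ from the summary tables, and the sharpened degree-by-degree estimates handle the borderline values of $(n,g)$. The only caveat --- one you share with the paper itself --- concerns the direction of the comparison in the borderline cases: since the inclusion of the minima induces a \emph{surjection} on $\pi_{(n-1)(g-1)}$, it is the tabulated group that arises as a quotient of the corresponding homotopy group of the moduli space rather than the reverse, so the closing ``quotient'' provisos should be read with that orientation in mind.
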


One can also give results for framed bundles, that is the moduli of pairs $(E,t)$, where $E$ is polystable and $t$ is a trivialisation at the base point; here one is quotienting the space of connections by the based gauge group ${\ca G}^0_\bbr$;
this action is always free, and so life is simpler; one has
\begin{equation}
B{\ca G}^0_\bbr \simeq {\ca A}/ {\ca G}^0_\bbr 
\end{equation}
and so then one can transfer our homotopy results even in the non-coprime case, reading off the first and second  homotopy groups of the framed moduli spaces from those of  $B{\ca G}^0_\bbr$, and from those of  $B{\ca SG}^0_\bbr$ in the fixed determinant case, referring to the tables above.

\end{document}